\tikzset{>=latex}
\newtheorem{theorem}{Theorem}[section]
\theoremstyle{definition}
\theoremstyle{remark}
\newtheorem*{remark}{Remark}
\newcommand{\setword}[2]{%
  \phantomsection
  #1\def\@currentlabel{\unexpanded{#1}}\label{#2}%
}
\newcommand{\dd}{\textnormal{d}}
\newcommand{\Id}{I^{\textnormal{DATA}}}
\newcommand{\DFE}{\textnormal{DFE}}
\newcommand{\EE}{\textnormal{EE}}
\newcommand{\R}{\mathbb{R}}
\newcommand{\RM}{\mathcal{R}}
\renewenvironment{abstract}{%
\hfill\begin{minipage}{0.95\textwidth}
\rule{\textwidth}{1pt}}
{\par\noindent\rule{\textwidth}{1pt}\end{minipage}}
\renewcommand\@maketitle{%
\hfill
\begin{minipage}{0.95\textwidth}
\vskip 2em
\let\footnote\thanks 
{\Large \bf \@title \par }
\vskip 1.5em
{\large \@author \par}
\end{minipage}
\vskip 1em \par
}
\begin{document}
%
\title{\nohyphens{Learning the seasonality of disease incidences from empirical data}}
\author[1]{Karunia Putra Wijaya}
\author[2,$\ast$]{Dipo Aldila}
\author[3]{Luca Elias Sch{\"a}fer}
\affil[1]{\small\emph{Mathematical Institute, University of Koblenz, 56070 Koblenz, Germany}}
\affil[2]{\small\emph{Department of Mathematics, Universitas Indonesia, 16424 Depok, Indonesia}}
\affil[3]{\small\emph{Department of Mathematics, Technical University of Kaiserslautern, 67663 Kaiserslautern \\Germany}}
\affil[$\ast$]{Corresponding author. Email: \href{mailto:aldiladipo@ui.sci.ac.id}{aldiladipo@ui.sci.ac.id}}
\maketitle
\begin{abstract}
{\textbf{Abstract}}: Investigating the seasonality of disease incidences is very important in disease surveillance in regions with periodical climatic patterns. In lieu of the paradigm about disease incidences varying seasonally in line with meteorology, this work seeks to determine how well standard epidemic models can capture such seasonality for better forecasts and optimal futuristic interventions. Once incidence data are assimilated by a periodic model, asymptotic analysis in relation to the long-term behavior of the disease occurrences can be performed using the classical Floquet theory, which explains the stability of the existing periodic solutions. For a test case, we employed an IR model to assimilate weekly dengue incidence data from the city of Jakarta, Indonesia, which we present in their raw and moving-average-filtered versions. To estimate a periodic parameter toward performing the asymptotic analysis, eight optimization schemes were assigned returning magnitudes of the parameter that vary insignificantly across schemes. Furthermore, the computation results combined with the analytical results indicate that if the disease surveillance in the city does not improve, then the incidence will raise to a certain positive orbit and remain cyclical.
~\\
~\\
{\textbf{Keywords}}: \textsf{nonautonomous periodic system , Floquet theory, data assimilation, IR model, the basic reproductive number}
\vspace*{-5pt}
\end{abstract}

\section{Introduction}

Investigating the seasonality of disease incidences plays a fundamental role in the detection of future outbreaks, and thus in the provision of economically viable control interventions. In the case of vector-borne diseases, previous studies suggested that the incidence is sensitive to the behavior of climatic parameters \cite{BDG2002,Bab2003,MMC2017,SEM2017}. The rationale behind this is that the aforementioned parameters, including water precipitation, temperature, and air humidity, have a major influence on the growth of virus-carrying vectors. Let us look, for example, at the case of dengue. A high temperature, on the one hand, prolongs the life of mosquitoes \emph{Aedes aegypti} and shortens the extrinsic incubation period of the dengue virus, which then increases the number of living infected mosquitoes \cite{MMC2017,LPF2011}. On the other hand, high rainfall induces the development of breeding sites for mosquitoes \cite{BVV2015,PWG2007}, while high wind speeds disrupt the development of the aquatic phases on leaf blades, light plastics, or other unstable sites that further induce negative correlation with the incidence of dengue \cite{CBL2013,RRJ2011}. The message here is that one can achieve a better understanding of the seasonality of disease incidences in some regions where the corresponding climatic parameters fluctuate periodically. 

One idea to approach fluctuating data and generate some forecasts is to develop epidemic models in which several biological properties can be adjusted. These properties include birth rate, recruitment, death rate, rate of infection, recovery, loss of immunity, incubation, temporary cross-immunity, secondary infection, age structuring, spatial heterogeneity, and internal (microbiological) processes \cite{GAB2017,CGS2017,RMS2015,PRT2017,WSS2017}. A promising feature of epidemic models is their accessibility to further development, i.e.,\ by including more biological properties, to further increase realism. As far as assimilating almost-periodic data is concerned, one can designate some involved parameters in the model to be periodic by an argument that can be correlated with some periodical climatic parameters \cite{Cus1998,HE2007,HC1997,IMN2007,SLP2007,SLY2003,WGS2016}. Decision-making comes into play when prior knowledge about the long-term behavior of the assimilating solution of the model is gained. This asymptotic analysis is the part where the \emph{basic reproductive number} plays a significant role. The basic reproductive number is defined as the number of secondary infections that happen when a single infective individual comes into a completely susceptible population during the infection period \cite{DHM1990,DH2000,DHR2009}. Typical asymptotic analysis results in the following statement: if the basic reproductive number is greater than one, then the disease will persist for a longer period of time; if the number is less than one, then the disease will die out for a much longer period of time \cite{DH2000,EV1998,SA2015}. This means that the larger the basic reproductive number, the higher the probability of the population remaining in an infection circle in the future. 


In the case of epidemic models where all the involved parameters are constant, determination of the basic reproductive number is straightforward. For models where the healthy and infective subpopulations are separable, the so-called \emph{next-generation method} has widely been used to generate the basic reproductive number by associating it with the local stability of the \emph{disease-free equilibrium} (DFE) and the \emph{endemic equilibrium} (EE) \cite{DHR2009,WS2002}. For models with periodic parameters, equilibria are no longer present, but periodic solutions are. Here, the determination of the true basic reproductive number becomes more challenging, since it should now be associated with the local stability of the trivial and nontrivial periodic solutions \cite{WGS2016,TW2014}. In lieu of the asymptotic analysis of nonautonomous periodic models, the local stability of periodic solutions can be understood with the aid of the Floquet theory. It addresses some conditions for which a periodic solution is locally asymptotically stable, which can bear a relation to the basic reproductive number from the autonomous counterpart or to a completely different basic reproductive number.

Some epidemic models have previously been introduced together with periodic parameters, with \cite{RMS2015,WGS2016,TW2014} or without the analysis of periodic solutions \cite{MBK2011,MKR2013,AHB2013}. One apparent issue from those with the analysis is that their parameters are never calibrated using real field data. Some other models brought along time-varying parameters to pronounce a perfect match between their solutions and field data, however requiring further effort to determine the prediction of the value of the optimal parameters at each time in the next time window \cite{GAB2017,WMN2012,ASH2016}. Unless sufficient forecasts of some extrinsic factors, e.g.,\ climatic parameters, are known, the last strategy potentially disregards some asymptotic analysis. Relying on a nonautonomous periodic model, our contribution lies heavily in combining the results from data assimilation and those from Floquet theory to perform a simple decision-making process. We use the final result to not only detect future outbreaks of a disease but also predict the unforeseen behavior of the incidence trajectory in the long run.

\section{Results from Floquet theory}\label{sec2}
Floquet theory has initially been proposed in \cite{Flo1883} to analyze the local stability of periodic solutions of the following linear differential equation with periodic coefficients
\begin{equation}\label{eq:linear}
\dot{z}=A(t)z,\quad z(0)=z_0,\ A(t)\text{ is } \sigma\text{-periodic}.
\end{equation}
The last equation appears in a multitude of applications, including biology \cite{RSE2011,BP2012}, chemistry \cite{DFB2014}, quantum physics \cite{Lun2000,Moo1993} and economics \cite{Hei2015}. According to \cite{Flo1883}, the important finding from the linear system~\eqref{eq:linear} is that the fundamental matrix of the system $Z(t)$ can be represented as 
\begin{equation*}
Z(t) = P(t)\exp\left(Qt\right),
\end{equation*}
where $P(t)$ is a continuously differentiable $\sigma$-periodic matrix and $Q$ is a constant matrix. All the eigenvalues of $Q$ are what we know as the \emph{Floquet exponents}; meanwhile, all the eigenvalues of $Z(\sigma)$ are the \emph{Floquet multipliers} that correspond to the system matrix $A(t)$. Under the assumption of nonsingularity over the system matrix, analyzing the Floquet exponents in relation to the local stability of the trivial periodic solution can be tricky if the system is large. One practical aid to help with this is by analyzing the structure of the system matrix; see \cite{TW2014} for details. 

Our study here is devoted to analyzing the existence and local stability of periodic solutions of the following nonlinear system
\begin{equation}\label{eq:nonlinear}
\dot{x}=f(t,x)=g(x) + \delta h(t,x),\quad x(0)=x_0,\ h\text{ is }\sigma\text{-periodic}.
\end{equation}
From the last system, $\delta$ denotes some constant, $h$ is assumed to be continuous in time, while both $g$ and $h$ are assumed to be continuously differentiable in state with bounded derivatives, evoking Lipschitz continuity and therefore the global existence of a unique solution. For typical nonlinear nonautonomous periodic systems, the existence of periodic solutions can always be associated with the equilibrium of the autonomous part of the system. Details are provided by the following theorem.

\begin{theorem}\label{thm:existperiodic}
Consider the nonlinear system~\eqref{eq:nonlinear}. Let $x^{\ast}$ be an equilibrium point of the autonomous part $\dot{x}= g(x)$. If all eigenvalues $\lambda$ of the Jacobian $\nabla_{x} g(x^{\ast})$ hold $\lambda\notin \frac{2\pi}{\sigma} i\mathbb{Z}$ where $i$ and $\mathbb{Z}$ denote the imaginary number and the set of integers, respectively, then there exist a neighborhood $U(x^{\ast})$ and $\alpha$ such that for every $|\delta|<\alpha$, there exists a $\sigma$-periodic solution of the system~\eqref{eq:nonlinear} around $x^{\ast}$ with a unique initial condition $x_0= x_0(\delta)\in U(x^{\ast})$.
\end{theorem}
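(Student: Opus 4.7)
The plan is to recast the existence of a $\sigma$-periodic solution as a fixed-point problem for the period (Poincar\'e) map and then invoke the implicit function theorem at $(x^{\ast},0)$. Let $\phi(\cdot\,;x_0,\delta)$ denote the unique global solution of~\eqref{eq:nonlinear}, which exists by the Lipschitz assumptions on $g$ and $h$. Define the Poincar\'e map $P(x_0,\delta):=\phi(\sigma;x_0,\delta)$ on a neighborhood of $(x^{\ast},0)$, and set $F(x_0,\delta):=P(x_0,\delta)-x_0$. A $\sigma$-periodic solution passing through $x_0$ is equivalent to $F(x_0,\delta)=0$, because $\sigma$-periodicity of $h$ implies that $\phi(\cdot+\sigma;x_0,\delta)$ solves the same Cauchy problem as $\phi(\cdot\,;x_0,\delta)$ whenever $\phi(\sigma;x_0,\delta)=x_0$, after which uniqueness of the solution forces periodicity globally.

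The first technical step is to verify that $F$ is $C^{1}$ in $(x_0,\delta)$ near $(x^{\ast},0)$. This follows from the standard theorem on smooth dependence of ODE solutions on initial data and parameters, applicable because $(t,x,\delta)\mapsto g(x)+\delta h(t,x)$ is continuous in $t$ and $C^{1}$ in $(x,\delta)$ with derivatives that are bounded on bounded sets. Clearly $F(x^{\ast},0)=\phi(\sigma;x^{\ast},0)-x^{\ast}=0$ since $x^{\ast}$ is an equilibrium of the unperturbed autonomous system $\dot{x}=g(x)$.

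The second step is to compute $D_{x_0}F(x^{\ast},0)$. The variational equation along the constant trajectory $x\equiv x^{\ast}$ at $\delta=0$ reads $\dot{Y}=\nabla g(x^{\ast})Y$ with $Y(0)=I$, so $D_{x_0}\phi(\sigma;x^{\ast},0)=\exp(\sigma A)$ where $A:=\nabla g(x^{\ast})$. Hence
\begin{equation*}
D_{x_0}F(x^{\ast},0)=\exp(\sigma A)-I,
\end{equation*}
whose spectrum is $\{e^{\sigma\lambda}-1:\lambda\text{ eigenvalue of }A\}$. The hypothesis $\lambda\notin\frac{2\pi}{\sigma}i\mathbb{Z}$ is precisely equivalent to $e^{\sigma\lambda}\neq 1$, so $D_{x_0}F(x^{\ast},0)$ is invertible.

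With these ingredients, the implicit function theorem applied to $F$ at $(x^{\ast},0)$ yields a neighborhood $U(x^{\ast})$, a constant $\alpha>0$, and a $C^{1}$ curve $\delta\mapsto x_0(\delta)$ with $x_0(0)=x^{\ast}$ and $F(x_0(\delta),\delta)=0$, uniquely determined within $U(x^{\ast})$ for each $|\delta|<\alpha$; the associated trajectory is the desired $\sigma$-periodic solution. The main obstacle is not conceptual but technical: one needs a clean justification of the $C^{1}$ dependence of the period map on both $x_0$ and $\delta$, so that the linearization identity $D_{x_0}P(x^{\ast},0)=\exp(\sigma A)$ is valid and the IFT is applicable. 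After that, the spectral reformulation of the non-resonance hypothesis is immediate.
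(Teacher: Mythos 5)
Your proposal is correct and follows essentially the same route as the paper's proof: define the period map, subtract the identity, compute its linearization at $(x^{\ast},0)$ via the variational equation to get $\exp(\sigma\nabla g(x^{\ast}))-\text{id}$, use the non-resonance hypothesis to see this is invertible, and apply the implicit function theorem, with periodicity of the vector field converting the fixed point into a genuine $\sigma$-periodic solution. The only cosmetic difference is that you justify invertibility by the spectral mapping theorem while the paper exhibits eigenvectors explicitly; both are fine.
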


\begin{proof}
See Appendix~\ref{app:exist}. 
\end{proof}

Observe that this existence theorem only requires looking at the eigenvalues of the Jacobian at an equilibrium, which raises technical difficulties once the explicit formulation of the equilibrium cannot be revealed. Now, in the case that a periodic solution exists, its stability can be analyzed by investigating the dynamics of the error between the actual solution and the periodic solution. The periodic solution apparently contains the linear part that resembles the linear system~\eqref{eq:linear} whose stability is determined by the Floquet exponents that correspond to the system matrix. Before going into more details of the stability, we briefly recall a much simpler mathematical definition of the stability of a periodic solution in the sense of Lyapunov. Let $x=x(t,0,x_0)$ and $\phi$ be the solution at time $t$ that was initiated from $0$ with the corresponding initial condition $x_0$ and a periodic solution of the nonlinear system~\eqref{eq:nonlinear}. The solution $\phi$ is called \emph{locally asymptotically stable} if there exists a positive number that bounds $\lVert x(0)-\phi(0)\rVert$ such that the trajectory of $\lVert x-\phi\rVert$ is bounded by a decreasing function that converges to zeros as the time $t$ tends to infinity. Conditions that certify the local stability are then proposed as in the following theorem.

\begin{theorem}\label{thm:stableperiodic}
Let $y:= x-\phi$ be some measure of the error between the actual and periodic solutions and $m(t, y) :=  f(t, y+\phi)- f(t,\phi)-A(t) y$ where $A(t):=\nabla_{ x} f(t,\phi)$. If the following conditions hold:
\begin{enumerate}[label= \normalfont ({A}\arabic*)]
\item $\lVert m(t, y)\rVert\leq M\lVert y(t)\rVert$ for some constant $M$ and all $t\in [0,\infty)$;\label{item:m}
\item all the Floquet exponents that correspond to $A(t)$ lie in the open left-half plane in $\mathbb{C}$; \label{item:floquet}
\end{enumerate}
then $\phi$ is locally asymptotically stable. 
\end{theorem}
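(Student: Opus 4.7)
The plan is to set up a perturbed linear equation for the error $y = x - \phi$, pass to integral form via the fundamental matrix of the linearized system, and close the loop with Gr\"onwall's inequality. First I would differentiate $y = x - \phi$ and subtract the defining relations $\dot{x} = f(t,x)$ and $\dot{\phi} = f(t,\phi)$, yielding
\begin{equation*}
\dot{y} \;=\; f(t, y+\phi) - f(t,\phi) \;=\; A(t)\, y + m(t, y),
\end{equation*}
which casts the error dynamics as a linear Floquet system driven by the nonlinear remainder $m$. The homogeneous part $\dot{z} = A(t) z$ matches the linear system~\eqref{eq:linear}, so I can invoke Floquet's representation $Z(t) = P(t)\exp(Qt)$ for its principal fundamental matrix.

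Next, condition~\ref{item:floquet} forces every eigenvalue of $Q$ to have strictly negative real part. Combined with the $\sigma$-periodicity, and hence boundedness, of both $P$ and $P^{-1}$, this delivers constants $K \geq 1$ and $\mu > 0$ such that
\begin{equation*}
\lVert Z(t) Z^{-1}(s) \rVert \;\leq\; K\exp(-\mu(t-s)) \quad\text{for all}\ 0\leq s\leq t.
\end{equation*}
Treating $m(t, y(t))$ as a forcing term, the variation-of-parameters formula then gives
\begin{equation*}
y(t) \;=\; Z(t)\, y(0) + \int_0^t Z(t) Z^{-1}(s)\, m(s, y(s))\, ds.
\end{equation*}

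Taking norms and combining the Floquet bound with condition~\ref{item:m} produces
\begin{equation*}
\lVert y(t) \rVert \;\leq\; K\exp(-\mu t)\lVert y(0) \rVert + KM\int_0^t \exp(-\mu(t-s))\lVert y(s) \rVert\, ds.
\end{equation*}
Multiplying by $\exp(\mu t)$ and applying Gr\"onwall's inequality to $u(t) := \exp(\mu t)\lVert y(t) \rVert$ yields
\begin{equation*}
\lVert y(t) \rVert \;\leq\; K\lVert y(0) \rVert \exp\bigl((KM - \mu)\, t\bigr),
\end{equation*}
a decaying envelope on $\lVert y \rVert$ whenever $KM < \mu$.

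The delicate, and in my view main, obstacle is the passage from this inequality to the local asymptotic stability claimed in the theorem, since~\ref{item:m} as stated does not by itself guarantee $KM < \mu$. The way out is to exploit that $m$ is the Taylor remainder: differentiating $m(t, y) = f(t, y+\phi) - f(t,\phi) - \nabla_{x} f(t,\phi)\, y$ gives $m(t, 0) \equiv 0$ and $\nabla_{y} m(t, 0) \equiv 0$, so by continuity of $\nabla_{y} m$ on a tube around $\phi$ over one period, using the $\sigma$-periodicity of $f$ in $t$, the effective constant $M$ in~\ref{item:m} can be shrunk at will by restricting to a sufficiently small neighborhood of $\phi$. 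I would then fix any $\mu' \in (0,\mu)$, choose the neighborhood so that the restricted constant satisfies $KM \leq \mu - \mu'$, and take $\lVert y(0) \rVert$ small enough that a standard bootstrap argument keeps $y(t)$ inside this neighborhood for all time. The resulting bound $\lVert y(t) \rVert \leq K\lVert y(0) \rVert \exp(-\mu' t)$ then furnishes the decreasing envelope required by the Lyapunov-style definition of local asymptotic stability stated just before the theorem.
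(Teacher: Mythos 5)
Your proposal is correct and shares the paper's skeleton --- the Floquet representation $Z(t)=P(t)\exp(Qt)$, variation of parameters, the exponential bound supplied by \ref{item:floquet}, and Gronwall's inequality --- but it closes the argument differently at the one place where the proof is genuinely delicate, namely how to get the effective Gronwall exponent negative. You obtain $KM<\mu$ by observing that $m$ is a Taylor remainder with $m(t,0)=0$ and $\nabla_y m(t,0)=0$, so the constant in \ref{item:m} can be made arbitrarily small on a thin enough tube around $\phi$ (uniformly in $t$ by $\sigma$-periodicity and continuity of $\nabla_x f$), and you then bootstrap to keep $y(t)$ inside that tube. The paper instead keeps $M$ fixed and argues by contradiction: assuming $\lVert y(t)\rVert$ does not tend to zero, it sets $\lVert y^{\ast}\rVert:=\min_{t}\lVert y(t)\rVert>0$, inserts the factor $\varepsilon\slash\lVert y^{\ast}\rVert\geq 1$ into the integral estimate, and chooses $\varepsilon$ and $\lVert y_0\rVert$ so that $C_1C_2C_3M\varepsilon\slash\lVert y^{\ast}\rVert-\lambda<0$, which forces exponential decay and hence a contradiction. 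Your route is the classical ``stability by first approximation'' argument and is arguably the more robust of the two: it does not require the infimum of $\lVert y(t)\rVert$ to be positive and attained, and it avoids the tension in the paper's choice of $\varepsilon$, which must simultaneously dominate $\lVert y(s)\rVert\geq\lVert y^{\ast}\rVert$ and be small relative to $\lVert y^{\ast}\rVert$. What the paper's version buys is that it uses \ref{item:m} literally, with one global constant $M$, and the same template transfers verbatim to the quadratic variant $\lVert m(t,y)\rVert\leq M\lVert y\rVert^2$ mentioned in the remark, simply by dropping $\lVert y^{\ast}\rVert$. Two cosmetic points: rename your decay rate $\mu$ to avoid colliding with the model's death rate, and note that the bound on $\lVert\exp(Qt)\rVert$ requires taking the rate strictly inside the spectral abscissa to absorb polynomial factors from nontrivial Jordan blocks.
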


\begin{proof}
See Appendix~\ref{app:stable}.
\end{proof}

\begin{remark}
Note that a similar stability result can be obtained for which the condition~\ref{item:m} is replaced by $\lVert m(t, y)\rVert\leq M\lVert y(t)\rVert^2$. The proof follows similarly as in Appendix~\ref{app:stable} by excluding the appearance of $\lVert y^{\ast}\rVert$.
\end{remark}


Observe that the stability theorem (Theorem~\ref{thm:stableperiodic}) requires the explicit formulation of an investigated periodic solution. Difficulties in determining the Floquet exponents remain if such an explicit formulation is not known. In fact, it is hard to reveal an explicit nontrivial periodic solution of most standard SIR-like epidemic models with periodic parameters even though the corresponding nontrivial equilibrium is known. Later, we use a similar argument for the proof of Theorem~\ref{thm:stableperiodic} to investigate the stability of a nontrivial periodic solution even without knowing its explicit formulation.


\section{SIRUV model and quasi-steady-state approximation}\label{sec3}

The SIR model was proposed by Kermack and McKendrick in \cite{KM1927}. The model was designed to capture the dynamics of three subpopulations of humans over time, each of which indicates the current status of infection. A person is categorized as: susceptible (S) if he or she is healthy but at risk of becoming infected by the disease; infective (I), if he or she obtains pathogens inside of his or her body and are able to transmit them to other people; recovered (R) if he or she is cured from and confers lifelong immunity against the disease. Nowadays SIR-like models are developed that also include the dynamics of the vector \cite{CGS2017,EV1998,AGS2013,MKK2017,MNR2017}. As for our current discussion, we use the following SIRUV model \cite{RRJ2011,RMS2015,RAS2013} for measuring the epidemicity of dengue diseases:
\begin{alignat*}{2}
\dot{S}&=\mu(N-S) - \frac{\beta}{M} SV + \kappa R,   \quad  &&\dot{I}= \frac{\beta}{M} SV  - (\gamma+\mu)I,\quad 
\dot{R}= \gamma I - (\mu+\kappa)R,\\
\dot{U} &= \Lambda - \frac{\rho}{N} U I - \theta U, \quad 
 &&\dot{V}=\frac{\rho}{N} U I  - \theta V.
\end{alignat*}

In the model, we indicated that any newborns of human and mosquito are always susceptible. We also indicated that any susceptible human experiences a very short incubation period, allowing him or her to move to the infective compartment immediately after a contact with an infective vector. The parameters in the human compartments $\mu$, $\beta$, $\gamma$, and $\kappa$ denote the death rate, the infection rate, the recovery rate, and the loss-of-immunity rate indicating a simplest representation of the so-called \emph{temporary cross-immunity} between sequential dengue infections \cite{MBK2011,MKR2013,LR2013,LR2016,Sab1952}, respectively. The variables $U,V$ measure the number of susceptible and infective mosquitoes. In a similar sense, $\Lambda$, $\rho$, and $\theta$ denote the constant recruitment rate, the infection rate, and the death rate of mosquitoes, respectively. The total populations $N,M$ are simply $N:=S+I+R$ and $M:=U+V$. We assume that all the involved parameters are positive.

What we can see from the model is that the lifetime period of humans is significantly greater than that of mosquitoes, i.e.,\ $\mu^{-1}\gg \theta^{-1}$, from which both dynamics occur on disparate timescales. As a consequence of this imbalance, the mosquito population equilibrates much faster than the human population. In such a situation, we refer to the mosquito population as the ``fast population'' and to the human population as the ``slow population.'' The so-called quasi-steady-state approximation (QSSA) engineer timescale separation to show that the dynamics of the mosquito population over a long period with a much large timescale behaves like its equilibrium \cite{RAS2013,SS1989,BDS1996}. Looking back at our model, this means
\begin{equation*}
U(I)\approx \frac{\Lambda N}{\theta N+\rho I},\quad V(I)\approx \frac{\rho\Lambda I}{\theta(\theta N + \rho I)}, \quad \text{and} \quad \frac{V(I)}{M}\approx \frac{I}{I+(\theta\slash \rho)N}.
\end{equation*}
Proposition of the only argument $I$ in the last formulation results from the fact that $\dot{N}=0$ in our model; therefore, $N$ can be seen as another constant. Accordingly, the dynamics of the fast population is no longer displayed and together with the constant human population $N$, the original SIRUV system can be reduced to a lower-dimensional system 
\begin{subequations}
\begin{align}
\dot{I} &= \beta (N-I-R)\frac{I}{I+(\theta\slash\rho)N} - (\gamma+\mu)I,\\
\dot{R} &= \gamma I- (\mu+\kappa)R.
\end{align}
\end{subequations}
Unless the time window of observation with the model is sufficiently large, i.e.,\ $\gg \theta^{-1}$, the last IR model may not be realistic in modeling the dengue transmission by excluding vector dynamics. Advantageously, not only is the IR model reliable at representing the original SIRUV model on a large time window, but it is also simpler for data assimilation.

\subsection{Uniperiod analysis}
With information sharing among experts, the parameters $\mu$, $\gamma$, and $\kappa$ can most likely be specified forthwith. The other two parameters $\nu:=\theta\slash\rho$ and $\beta$ require a review of the historical infections, and therefore of the behavior of given field data. To capture the periodicity of field data, we designated
\begin{equation*}
\beta(t) = \alpha + \delta p(t),
\end{equation*}
where $p$ and its antiderivative $\int_0^tp(s)\,\text{d}s$ are continuous $\sigma$-periodic functions and $\nu$ as to be constant; see the specific choice $\nu=1\slash 2$ in \cite{RAS2013,GAB2017}. It is worth noting that $p$ has to be chosen in such a way that its antiderivative vanishes at $0,\sigma,2\sigma,\cdots$. By the last specification we actually force the data to possess exactly one significant period $\sigma$ or many periods whose least common multiple is $\sigma$. In addition, by the last specification we rediscover the nonlinear system~\eqref{eq:nonlinear}, which is ready for the analysis. The vector field of the IR model clearly is continuously differentiable in state for any positive bounded $N$ and continuous in time, which evokes the global existence of a unique solution. In addition, the product between the vector field and the outward normal to the boundary of the nonnegative orthant in the two-dimensional Euclidean space evaluated at all the points in each boundary returns in nonpositive values. This means that once the initial condition of the system is nonnegative, the solution trajectory would remain nonnegative for all time in the future. From the autonomous part
\begin{align*}
\dot{I} &= \alpha (N-I-R)\frac{I}{I+\nu N} - (\gamma+\mu)I\\
\dot{R} &= \gamma I- (\mu+\kappa)R,
\end{align*}
we can easily reveal the DFE and the EE
\begin{align*}
\DFE=(0,0) \quad \text{and} \quad \EE=\left(\frac{N(\mu+\kappa)(\gamma+\mu)\nu\left(\frac{\alpha}{(\gamma+\mu)\nu}-1\right)}{(\alpha+\mu)(\gamma+\mu+\kappa)+\gamma\kappa},\frac{N\gamma(\gamma+\mu)\nu\left(\frac{\alpha}{(\gamma+\mu)\nu}-1\right)}{(\alpha+\mu)(\gamma+\mu+\kappa)+\gamma\kappa}\right),
\end{align*}
where the existence of the EE from the biological point of view applies if the basic reproductive number $\RM_0:=\alpha\slash (\gamma+\mu)\nu>1$. Since both equilibria are explicit, we immediately obtain the following result based on Theorem~\ref{thm:existperiodic}.

\begin{theorem}\label{thm:existIR}
There always exists a trivial periodic solution $(I,R)=(0,0)$ and there exists a nontrivial periodic solution around $\EE$ providing that $\mathcal{R}_0>1$.
\end{theorem}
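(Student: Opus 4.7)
The plan is to handle the two periodic solutions separately. For the trivial one, I would observe that $(I,R)=(0,0)$ is in fact an equilibrium of the \emph{full} nonautonomous IR system, not merely of its autonomous part: substituting $I=R=0$ into the nonlinearity makes the factor $I/(I+\nu N)$ vanish, so both right-hand sides are identically zero regardless of $\beta(t)$. The constant curve $(0,0)$ is therefore trivially $\sigma$-periodic, and no appeal to Theorem~\ref{thm:existperiodic} is required for this part.

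For the nontrivial solution I would invoke Theorem~\ref{thm:existperiodic} at $x^{\ast}=\EE$. The existence of $\EE$ as an equilibrium of the autonomous part $\dot{x}=g(x)$ is guaranteed by the hypothesis $\mathcal{R}_0>1$. It then remains to compute $\nabla_{x} g(\EE)$ and show that none of its eigenvalues lie in $\frac{2\pi}{\sigma}i\mathbb{Z}$. Writing $a:=I^{\ast}/(I^{\ast}+\nu N)>0$ and using the equilibrium identity $\alpha(N-I^{\ast}-R^{\ast})=(\gamma+\mu)(I^{\ast}+\nu N)$ to collapse the partial derivatives of the nonlinear incidence term, the Jacobian reduces to
\begin{equation*}
\nabla_{x} g(\EE)=\begin{pmatrix} -(\alpha+\gamma+\mu)\,a & -\alpha\, a \\ \gamma & -(\mu+\kappa) \end{pmatrix}.
\end{equation*}

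A trace/determinant argument then closes the eigenvalue condition: the trace $-(\alpha+\gamma+\mu)a-(\mu+\kappa)$ is strictly negative and the determinant $a\bigl[(\alpha+\gamma+\mu)(\mu+\kappa)+\alpha\gamma\bigr]$ is strictly positive, so both eigenvalues have negative real part. In particular $\lambda=0$ and all nonzero purely imaginary values are excluded, hence the hypothesis of Theorem~\ref{thm:existperiodic} holds and yields a $\sigma$-periodic solution in a neighborhood of $\EE$ for all sufficiently small $|\delta|$. The main obstacle I anticipate is essentially bookkeeping: the two contributions to $\partial_I g_1$ appear unwieldy at first, and one must substitute the equilibrium identity at the right moment to obtain the clean coefficient $-(\alpha+\gamma+\mu)a$. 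A minor notational caveat is that the threshold denoted $\alpha$ in Theorem~\ref{thm:existperiodic} clashes with the model parameter $\alpha$; I would simply rename the threshold to avoid confusion.
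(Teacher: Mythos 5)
Your proposal is correct, and for the nontrivial solution it follows essentially the same route as the paper: invoke Theorem~\ref{thm:existperiodic} at $\EE$ and rule out eigenvalues in $\frac{2\pi}{\sigma}i\mathbb{Z}$ by showing the characteristic polynomial has positive linear and constant coefficients (equivalently, negative trace and positive determinant), so both roots have strictly negative real part. The paper does this with the raw, unreduced coefficients $a$ and $b$ of $\lambda^2+a\lambda+b$ and some inequality massaging under $\RM_0>1$; your use of the equilibrium identity $\alpha(N-I^{\ast}-R^{\ast})=(\gamma+\mu)(I^{\ast}+\nu N)$ to collapse the Jacobian to $-(\alpha+\gamma+\mu)a$ in the $(1,1)$ entry is cleaner and makes the sign conditions immediate. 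Where you genuinely diverge is the trivial solution: the paper also runs it through Theorem~\ref{thm:existperiodic}, computing the eigenvalues $\alpha/\nu-\gamma-\mu$ and $-\mu-\kappa$ of $\nabla_x g(\DFE)$ and asserting they are ``never pure imaginary,'' whereas you simply note that $(0,0)$ annihilates the full nonautonomous vector field and is therefore a constant, hence $\sigma$-periodic, solution with no perturbation argument needed. Your route is not only shorter but strictly more robust: the paper's first eigenvalue vanishes exactly when $\RM_0=1$, and $0\in\frac{2\pi}{\sigma}i\mathbb{Z}$, so the hypothesis of Theorem~\ref{thm:existperiodic} actually fails there and the paper's argument does not deliver the ``always'' in the statement, while your direct verification does. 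Your closing remark about the clash between the threshold $\alpha$ of Theorem~\ref{thm:existperiodic} and the model parameter $\alpha$ is a fair observation about the paper's notation and does not affect correctness.
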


\begin{proof}
See Appendix~\ref{app:existIR}.
\end{proof}

We observe that the trivial periodic solution is explicit, whereas the nontrivial periodic solution is not. Using a similar argument for the proof of Theorem~\ref{thm:stableperiodic}, we were able to find the following sufficient conditions for the stability.

\begin{theorem}\label{thm:stableIR}
The following assertions hold true.
\begin{enumerate}[label=\normalfont ({B}\arabic*)]
\item The trivial periodic solution is locally asymptotically stable if the basic reproductive number $\RM_0<1$.\label{item:trivial}
\item The existing nontrivial equilibrium $\phi$ around \EE~is locally asymptotically stable if \label{item:nontrivial}
\begin{equation*}
\RM_{\max}:=\RM_0\max_{0\leq t\leq \sigma}\Phi(t)<1,
\end{equation*}
where $\Phi(t):=-\frac{\nu\phi_1(t)}{\phi_1(t)+\nu N} + \frac{\nu(N-\phi_1(t)-\phi_2(t))}{\phi_1(t)+\nu N} \left(1-\frac{\phi_1(t)}{\phi_1(t)+\nu N}\right)$.
\end{enumerate}
\end{theorem}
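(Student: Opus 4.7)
The plan is to apply Theorem~\ref{thm:stableperiodic} (more precisely, the quadratic-remainder variant noted in the Remark) to each of the two periodic solutions, verifying both conditions \ref{item:m} and \ref{item:floquet}. The nonlinear term in $f_1$ is a rational function whose denominator $I+\nu N$ is strictly positive in any neighborhood of a periodic solution that stays in the biologically feasible region, so the vector field is $C^2$ there; a Taylor expansion at $\phi$ gives $\lVert m(t,y)\rVert \le M\lVert y\rVert^2$, which suffices by the Remark. Thus the real work in each case is to locate the Floquet exponents of $A(t) := \nabla_{x} f(t,\phi(t))$.

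For \ref{item:trivial}, $\phi \equiv (0,0)$ makes the linearization
\[
A(t) = \begin{pmatrix} \beta(t)/\nu - (\gamma+\mu) & 0 \\ \gamma & -(\mu+\kappa) \end{pmatrix}
\]
lower triangular, so the fundamental matrix integrates entrywise. The two Floquet multipliers come out as $\exp(\sigma(\gamma+\mu)(\RM_0-1))$ and $\exp(-\sigma(\mu+\kappa))$, where the reduction $\int_0^\sigma \beta(s)\,\dd s = \alpha\sigma$ uses the hypothesis $\int_0^\sigma p(s)\,\dd s = 0$, itself forced by the $\sigma$-periodicity of the antiderivative of $p$. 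Both multipliers have modulus strictly less than one exactly when $\RM_0 < 1$, verifying \ref{item:floquet} and completing \ref{item:trivial} via Theorem~\ref{thm:stableperiodic}.

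For \ref{item:nontrivial}, the nontrivial periodic solution $\phi$ produced by Theorem~\ref{thm:existIR} is not known in closed form, so the monodromy matrix of $A(t)$ cannot be computed directly. Instead I would mimic the Gronwall/variation-of-constants argument underlying Theorem~\ref{thm:stableperiodic}, applied in situ to the error equation $\dot y = A(t) y + m(t,y)$. A direct differentiation of $f_1$ at $\phi$ yields the key algebraic identity
\[
a_{11}(t) + (\gamma+\mu) \;=\; \frac{\beta(t)}{\nu}\,\Phi(t),
\]
which is precisely why $\RM_0\Phi(t)$ is the relevant scalar: combined with the smallness of $|\delta|$ granted by Theorem~\ref{thm:existperiodic}, the assumption $\RM_{\max} < 1$ produces a uniform upper bound $a_{11}(t) \le -\eta$ for some $\eta > 0$. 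I would then use a weighted quadratic Lyapunov functional $V(y) := y_1^2 + \lambda y_2^2$, with $\lambda$ and a Young splitting tuned to exploit this bound and the Hurwitz constant second row $(\gamma,-(\mu+\kappa))$, to obtain $\dot V \le -\eta' V$ along the linearized flow; the quadratic remainder $m$ is then absorbed for sufficiently small initial error, yielding local exponential decay of $\lVert y\rVert$ exactly as in the appendix proof of Theorem~\ref{thm:stableperiodic}.

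The main obstacle is choosing the weight $\lambda$ and the Young splitting so that both diagonal coefficients of $\dot V$ become uniformly negative in $t$ from the single scalar hypothesis $\RM_{\max} < 1$. This hinges on uniform bounds for $|a_{12}(t)| = \beta(t)\phi_1(t)/(\phi_1(t)+\nu N)$, which follow from the continuity and boundedness of $\phi$ in the neighborhood of $\EE$ provided by Theorem~\ref{thm:existperiodic}; once those are in place, the required negativity reduces to a purely algebraic inequality in the fixed parameters $\gamma,\mu,\kappa,\nu,N$ and is implied by $\RM_{\max}<1$.
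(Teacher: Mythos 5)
Your treatment of \ref{item:trivial} is essentially the paper's: the linearization at $\phi\equiv 0$ is lower triangular, the periodic part of $\beta$ integrates to zero over one period, and the Floquet multipliers are $\exp(\sigma(\gamma+\mu)(\RM_0-1))$ and $\exp(-\sigma(\mu+\kappa))$; the paper verifies \ref{item:m} by a direct linear bound on $m$ rather than via the quadratic-remainder Remark, but that is immaterial. This part is fine.

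For \ref{item:nontrivial} you depart from the paper (which factors the fundamental matrix as $\tilde P(t)\exp(\tilde Qt)$, with the $\delta p$-contribution absorbed into a bounded periodic factor $\tilde P$, and then applies the mean value theorem for integrals to reduce everything to the sign of the $(1,1)$ entry of $\tilde Q$), and your route has a genuine gap at exactly the point you flag as "the main obstacle." First, the uniform bound $a_{11}(t)\le-\eta$ does not follow from $\RM_{\max}<1$: since $a_{11}(t)+\gamma+\mu=\frac{\beta(t)}{\nu}\Phi(t)=\frac{\alpha}{\nu}\Phi(t)+\frac{\delta p(t)}{\nu}\Phi(t)$, the hypothesis controls only the $\alpha$-part, and the leftover term is controlled only if $|\delta|$ is small relative to the gap $1-\RM_{\max}$ --- an extra hypothesis not in the statement (the $\alpha$ from Theorem~\ref{thm:existperiodic} guarantees existence, not this quantitative smallness). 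The paper avoids this by routing the $\delta p$-part into $\tilde B(t)$ rather than demanding pointwise negativity of $a_{11}$. Second, and more seriously, negative definiteness of $\frac{\dd}{\dd t}V$ for $V=y_1^2+\lambda y_2^2$ requires the $t$-uniform discriminant inequality $\left(a_{12}(t)+\lambda\gamma\right)^2<4\lambda(\mu+\kappa)\,\lvert a_{11}(t)\rvert$, and $\RM_{\max}<1$ provides no lower bound on $\lvert a_{11}\rvert$ relative to $a_{12}(t)$ and $\gamma$: the margin $\gamma+\mu-\frac{\alpha}{\nu}\max\Phi$ can be arbitrarily small while the off-diagonal entries remain of order one, and no constant weight $\lambda$ can cancel the time-varying $a_{12}(t)$. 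So the assertion that the required negativity "is implied by $\RM_{\max}<1$" is unsubstantiated and in general false; a frozen-time spectral condition does not yield a common quadratic Lyapunov function for a nonautonomous system. The paper never needs to dominate the off-diagonal terms at all: with $a_{12}\le0$ and $\gamma>0$, negativity of the $(1,1)$ entry of $\tilde Q$ alone forces a negative trace and positive determinant of $\tilde Q$, hence both eigenvalues in the open left-half plane, which is the condition \ref{item:floquet} it actually verifies. To repair your argument you would either have to adopt the paper's factorization or derive a quantitative inequality relating $\max_t\lvert a_{12}(t)\rvert$, $\gamma$, $\mu+\kappa$ and $\min_t\lvert a_{11}(t)\rvert$, which is a strictly stronger hypothesis than $\RM_{\max}<1$.
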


\begin{proof}
See Appendix~\ref{app:stableIR}.
\end{proof}

\begin{remark}
Note that it is not possible to calculate the threshold parameter $\RM_{\max}$ without knowing the explicit formulation of $\phi$. However, thanks to the periodicity of $\phi$, its numerical value can be traced when the model is realized within a sufficiently large time window. By this trace, finding $\RM_{\max}$ becomes completely numerical. One should note that the maximum of $\Phi$ is achieved if and only if $\phi\equiv 0$, which can never be the case by the nontriviality of $\phi$. Therefore, it always holds that $\RM_{\max}<\RM_0$.
\end{remark}

We investigate dengue transmission from the city of Jakarta, Indonesia, as a specific application of the previous analytical findings regarding the IR model. By assimilating collected data of dengue incidences from the city using the IR model, we expect to discover the optimal values of $\beta$ and $\nu$ based on the data. A successful completion of this task allows us to "judge'' what happens to the number of incidences in the long run according to Theorems~\ref{thm:existIR} and~\ref{thm:stableIR}.

\section{Practical instance related to decision-making}\label{sec4}

Jakarta shares warm and humid weather conditions that are typical for most of the cities in Indonesia. Through its position close to the equator, the temperature ranges between (23.9, 23.83) and (31.8, 31.38) degrees Celsius (from the stations "Observatory'' and "Tanjung Priok'', respectively) all year round \cite{NOA2017}, which is favorable for the development of Aedes mosquitoes \cite{MBB2016}. Despite being overwhelmed by high-rising skyscrapers, Jakarta still has a number of slum areas and less-maintained rivers, and is in immediate contact with a warm calm northern sea, which makes its northern coastal areas prone to flooding \cite{TEM2016}. The nature of the tropical monsoon also gives Jakarta a dry season featuring sunlight with average visibility (4.4, 4.86) and highest temperatures between June and September, and a wet season featuring downpours with average precipitation (0.6785, 0.6568) and lowest temperatures between October and May \cite{NOA2017}. 

Jakarta has been considered as one of the most endemic regions for dengue fever in Indonesia since 1960s \cite{KUK2014}. The data we present here show that the outbreak falls between April and July. These data were collected from 154 hospitals in Jakarta by the Epidemiology Department, Jakarta Health Office, consisting of daily cases from six subregions of Jakarta by separate illnesses: suspected dengue fever, dengue fever, dengue hemorrhagic fever, and dengue shock syndrome. A person with suspected dengue is a patient that already has symptoms such as fever, headache, and some rash on the body but has not yet been confirmed with a blood test. The six subregions include Central Jakarta, North Jakarta, East Jakarta, South Jakarta, West Jakarta, and Thousand Islands. The twenty-four datasets depict the dengue incidences simultaneously from January 2008 to June 2017. One should note that access to the data was restricted and was granted upon personal request. For reasons of avoiding bias due to existing holidays and making the data assimilation program faster, we accumulated the daily data into weekly data. The resulting weekly data consist of 505 data points as shown in its raw and filtered versions in Fig.~\ref{fig:data} from which the first 473 points serve as the training data for the assimilation. It is worth noting that during the observation time window, moderate El Ni\~{n}o events occurred between 2009 and 2010 \cite{IRI2014} and similar temperature anomalies occurred on July--October 2015 \cite{KKO2016}. The impacts of El Ni\~{n}o have always been associated with warmer weather conditions around the northern coastal areas, which apparently improved the cycle and spread of Aedes mosquitoes and accelerated the replication of dengue virus \cite{MMC2017,LPF2011}. This might be considered as a logical reason behind the greater incidences during or slightly after those periods of time; see Fig.~\ref{fig:data}.

\begin{figure}[ht]
\centering
\includegraphics[width=13.33cm,height=5cm]{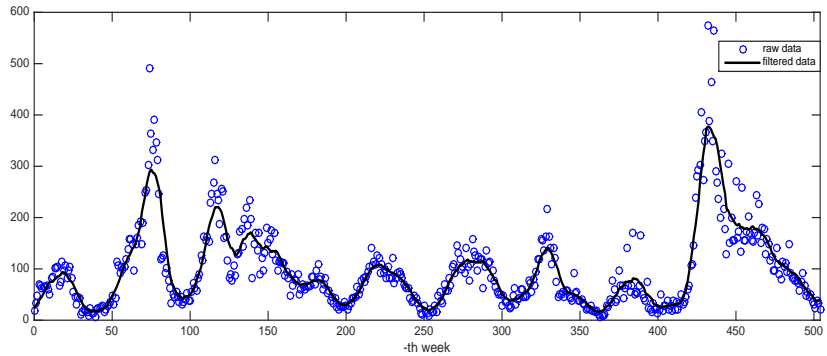}
\label{fig:data}
\caption{Weekly data of dengue incidence in the city of Jakarta by mixed illnesses: raw data (blue) and moving-average-filtered data after compensating delay (black). The moving window to use consisted of 13 weeks (quarter of a year).}
\end{figure}

\begin{figure}[htbp!]
\centering
\subfigure[]{
\includegraphics[width=8.2cm,height=3.2cm]{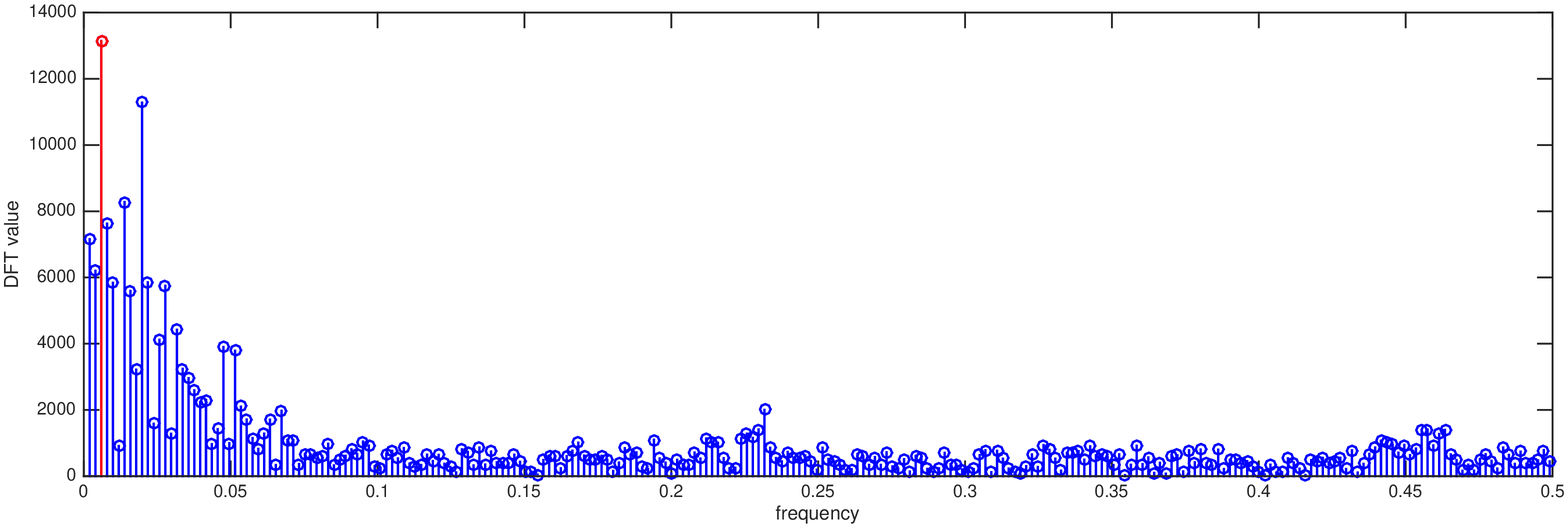}
}
\hfill\hspace*{-0.5cm}
\subfigure[]{
\includegraphics[width=8.2cm,height=3.2cm]{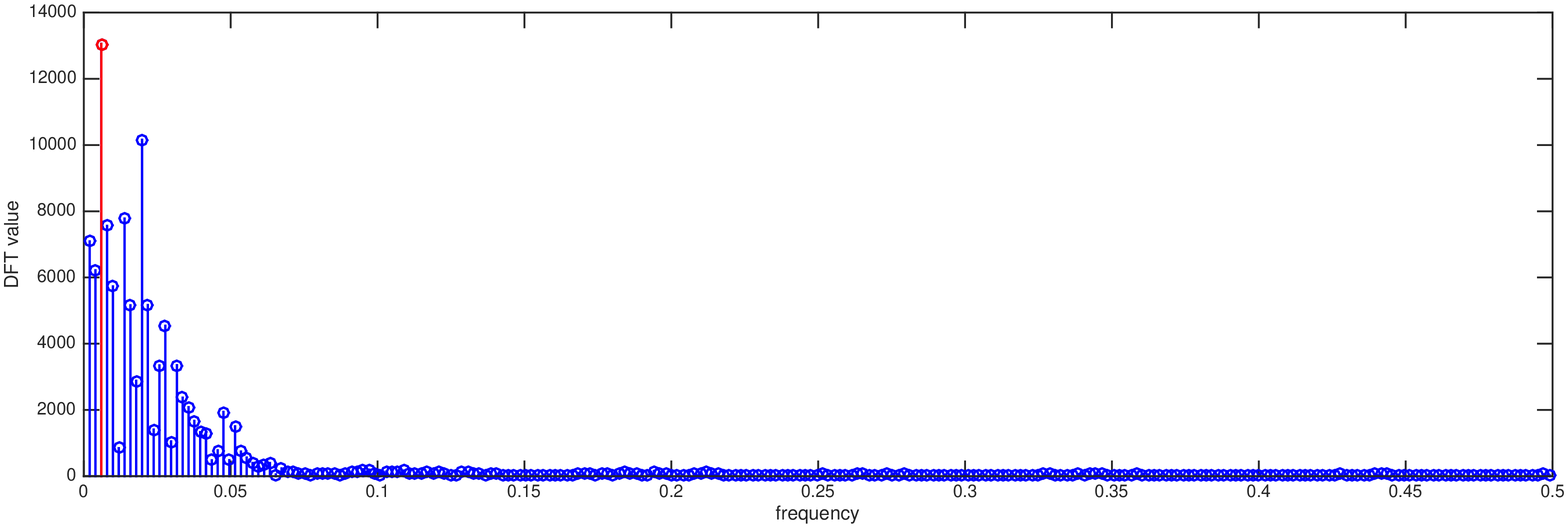}
}
\caption{Fourier transformation over the raw data (a) and the filtered data (b): the most significant frequencies appear in red.}
\label{fig:fftdata}
\end{figure}

In the interest of designing the infection rate $\beta$, we performed Fourier transformation over the data as indicated in Fig.~\ref{fig:fftdata} and extracted the most significant frequencies. Preliminary analysis using Fourier transformation shows marked frequencies 0.005941, 0.017822, and 0.0237624 for both raw and filtered data, returning the periods 168.32, 56.11, and 42.08 weeks, respectively. The second significant period indicates that dengue outbreaks have been reemerging almost every 56.11 weeks in Jakarta since January 2008, which resemble annual events. This can also be explained from Fig.~\ref{fig:data} that shows that there has been a single outbreak every year. Furthermore, the local government has implemented several intervention strategies to reduce the burden of infections including regular removal of mosquito breeding sites, dissemination of temephos to kill mosquito larvae, fumigation inside and outside human dwellings, orders to wear repellents and long clothing that help prevent exposure to mosquitoes, and quite recently, since October 2016, vaccination \cite{San2016}. The data we presented merely show that those strategies were either less effective or passively implemented, as the incidence was shown to revolve over time. 

\subsection{Data assimilation schemes}
Let us indicate the first week and the last week in the training data as $0$ and $T$, respectively. For reasons of well-posedness, let us assume that the discrete dataset $\Id$ has been interpolated to occupy the continuous domain $\Omega:=[0,T]$. For the first step, we designated 
\begin{equation*}
\beta(t) = \alpha + \delta\cos\left(2\pi\omega t\right),\quad\text{where }\omega=1\slash\sigma.
\end{equation*}
Our objective is now minimizing the metric between solution $I$ from the IR model and the data $\Id$, but preventing $\beta$ from being arbitrarily large. This task reads as
\begin{equation}\label{eq:optim}
\begin{aligned}
& \underset{(\alpha,\delta,\omega,\nu,I_0,R_0)\in\R^6}{\text{minimize}}
& &\frac{1}{2}\int_{\Omega}\left(I-\Id\right)^2\,\text{d}t + \frac{\lambda}{2} \lVert (\alpha,\delta,\omega,\nu)\rVert^2,\\
& \text{subject to}
& & \dot{I} = \beta (N-I-R)\frac{I}{I+\nu N} - (\gamma+\mu)I,\\
&
& & \dot{R} = \gamma I- (\mu+\kappa)R.
\end{aligned}
\end{equation}
Recall that $\nu=\theta\slash\rho$ was originated from the SIRUV model, and $I_0,R_0$ are the initial conditions for $I$ and $R$, respectively. The regularization parameter $\lambda$ adjusts the desired values of the parameters: the larger it is, the smaller the values of the parameters. For the performance comparison, we further classified the scheme~\eqref{eq:optim} into four schemes, i.e.,\ by adding further information. This classification can be seen from Fig.~\ref{fig:problems}.

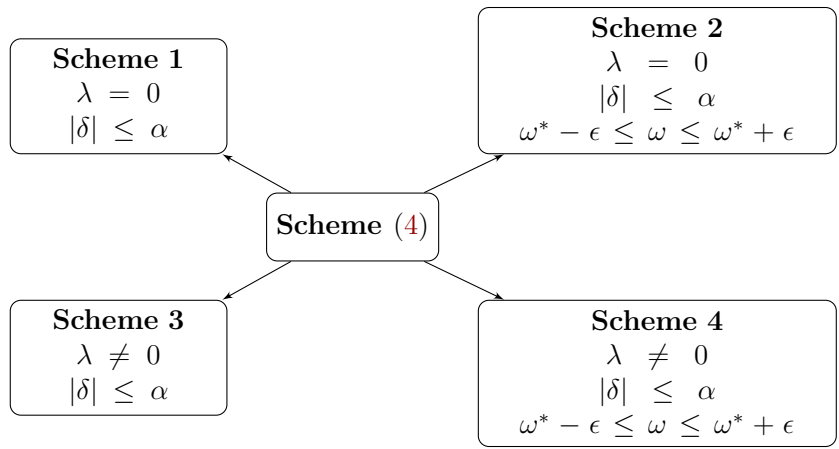
\begin{figure}[ht!]
\centering
\begin{tikzpicture}[scale=0.9, transform shape, >=latex']
        \tikzset{block/.style= {draw, rectangle, rounded corners, align=center,minimum width=2cm,minimum height=1cm},
        }
        \node [block]  (start) {\textbf{Scheme~\eqref{eq:optim}}};
        
        \node [block, above left = 0.8cm of start, text width= 7em] (A){\vbox{
        \setword{\textbf{Scheme~1}}{word:prob1}\\
        $\lambda=0$\\
	$|\delta|\leq \alpha$
}};
        \node [block, above right = 0.8cm of start, text width= 12em] (B){\vbox{
        \setword{\textbf{Scheme~2}}{word:prob2}\\
        $\lambda=0$\\
	$|\delta|\leq \alpha$\\
	$\omega^{\ast}-\epsilon\leq \omega\leq \omega^{\ast}+\epsilon$
        }};

	\node [block, below left = 0.8cm of start, text width= 7em] (C){\vbox{
	\setword{\textbf{Scheme~3}}{word:prob3}\\
	$\lambda\neq 0$\\
	$|\delta|\leq \alpha$
        }};
        
        \node [block, below right = 0.8cm of start, text width= 12em] (D){\vbox{
        \setword{\textbf{Scheme~4}}{word:prob4}\\
	$\lambda\neq 0$\\
	$|\delta|\leq \alpha$\\
	$\omega^{\ast}-\epsilon\leq \omega\leq \omega^{\ast}+\epsilon$
        }};

        \path[draw, ->]
            (start) edge (A)
            (start) edge (B)
            (start) edge (C)
            (start) edge (D) ;
    \end{tikzpicture}
\label{fig:problems}
\caption{Classification of the original scheme~\eqref{eq:optim} into four schemes.}
\end{figure}

The additional constraint on the frequency comes from the idea that the frequency of the infection rate $\beta$ should be similar to that of the data $\omega^{\ast}$, which is found from the Fourier transformation. The fact that some significant frequencies gather in the neighborhood of $\omega^{\ast}$ leads to the idea of letting $\omega$ discover its optimal value within the neighborhood $[\omega^{\ast}-\epsilon,\omega^{\ast}+\epsilon]$. Note that by the zero regularization parameter $\lambda$, the optimization problem~\eqref{eq:optim} allows the intercept $\alpha$, the amplitude $\delta$, and $\nu$ to be arbitrarily large. In addition, the constraint $|\delta|\leq\alpha$ aims at keeping $\beta$ nonnegative. 

\subsection{Numerical results}
Note that the optimization problem~\eqref{eq:optim} is an infinite-dimensional problem. Interestingly, in the problem, the initial states $I_0,R_0$ are treated as parameters whose optimal values have to be found to minimize the squared error between the solution of the IR model and the measured data over the whole time window $\Omega$. To solve the problem we use the so-called \emph{direct method} that is based on the principle of "discretize then optimize". According to this principle, the problem~\eqref{eq:optim} is to be fully discretized onto the discrete time domain on which the data were originally defined, returning a nonlinear programming (NLP) problem. Accordingly, the whole discrete states $I,R$ and the involved parameters $\alpha,\delta,\omega,\nu$ are treated as the dynamic variables in the resulting NLP problem. The discrete states now must satisfy a discrete version of the IR model whose "density'' depends on the discretization scheme taken. Either one-step or multi-step method is considered "sparse'' if it has relatively low order, but "dense'' if it has relatively high order. To avoid encountering instability in using the solution of the IR model to depict stiff curves from the data, it is always desirable to use an implicit scheme. In this study, we use the Gau\ss--Legendre scheme \cite{Ise2008} to discretize our IR model.

The fact that the states also take a role as dynamic variables increases the computational effort as the number of data points increases. There are a variety of methods for solving large-scale optimization problems. One of the most widely used methods for dealing with the direct method after optimal control problems is sequential quadratic programming (SQP) \cite{Spe1998,BM2000,MMM2000}. It originates from the idea of finding the zero of the gradient of the Lagrangian function, which satisfies the complementary slacknesses as well as the predefined equality constraints. Mimicking the Hessian of the Lagrangian function using a positive-definite matrix, the method requires a quadratic programming (QP) problem to be solved using the matrix on every iterate and updating the matrix using quasi-Newton strategies to solve a similar QP problem on the next iterate. With the so-called Broyden--Fletcher--Goldfarb--Shanno (BFGS) update for the positive-definite matrix, SQP allows superlinear convergence to a local optimal solution, providing that the starting point is close to it \cite{Che1996}.

\begin{figure}[htbp!]
\centering
\subfigure[]{
\includegraphics[width=8cm,height=3cm]{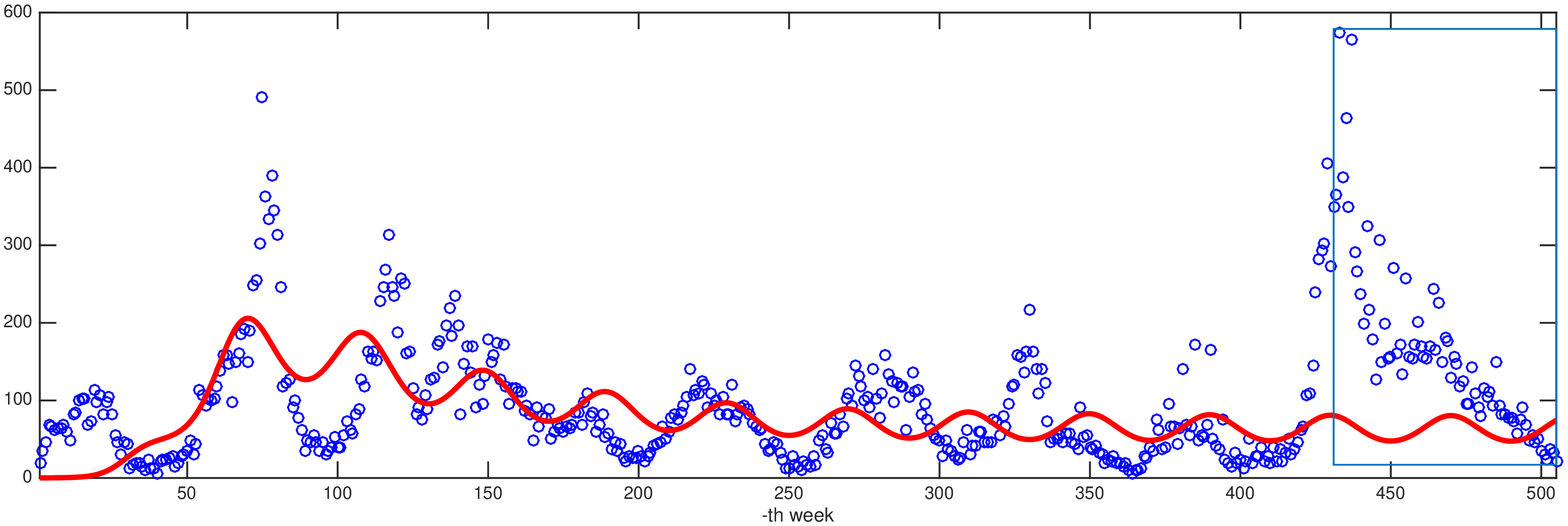}
}
\hfill\hspace*{-0.6cm}
\subfigure[]{
\includegraphics[width=8cm,height=3cm]{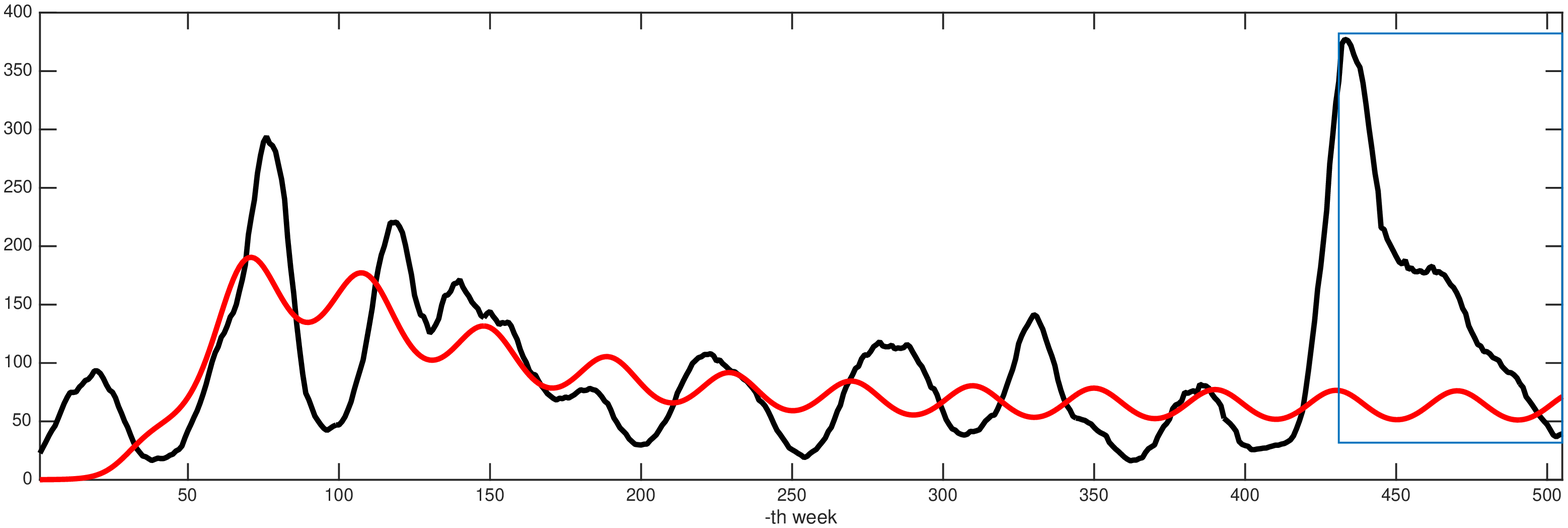}
}
\hfill
\subfigure[]{
\includegraphics[width=8cm,height=3cm]{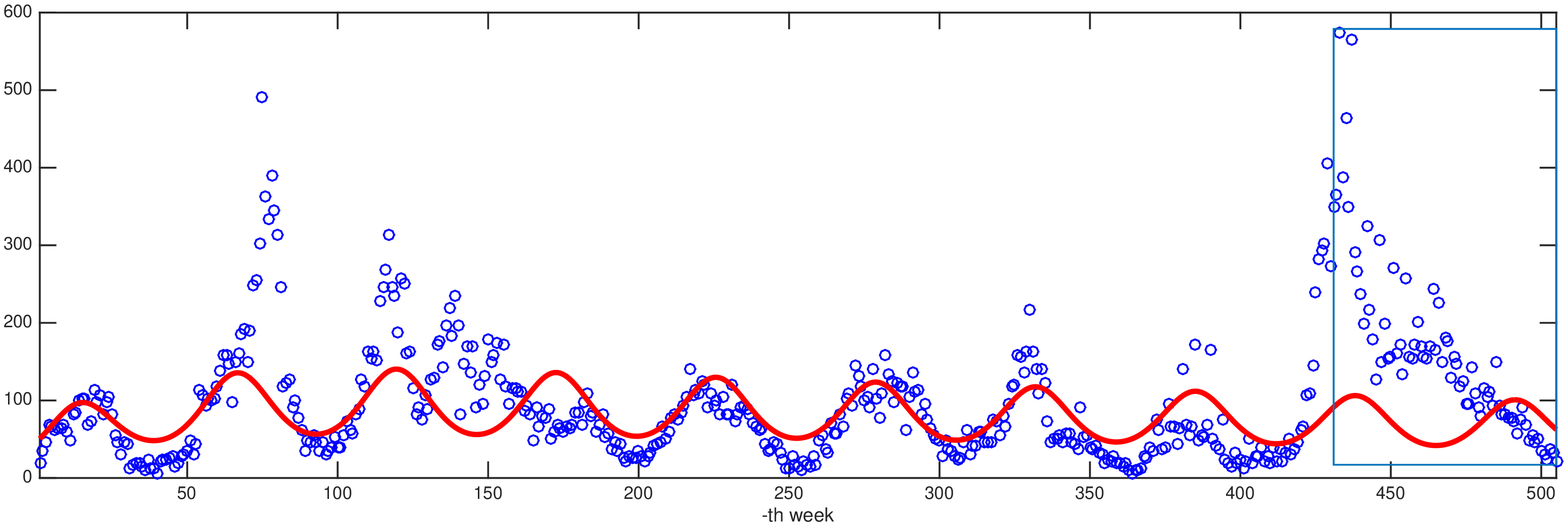}
}
\hfill\hspace*{-0.6cm}
\subfigure[]{
\includegraphics[width=8cm,height=3cm]{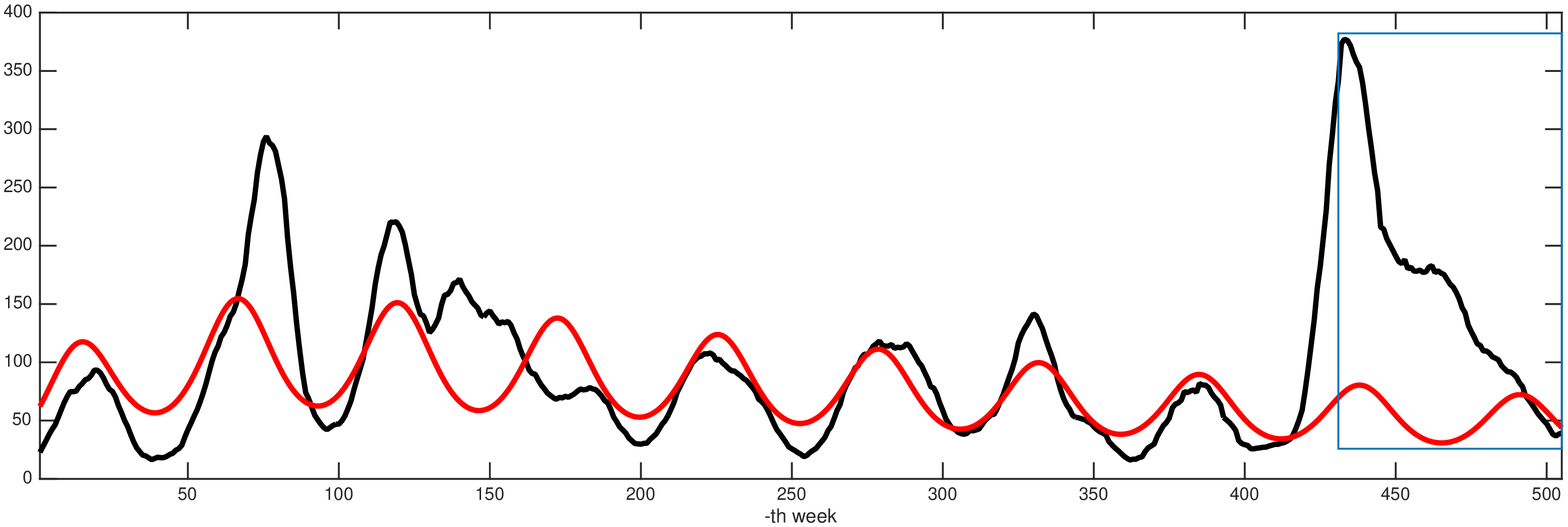}
}
\hfill
\subfigure[]{
\includegraphics[width=8cm,height=3cm]{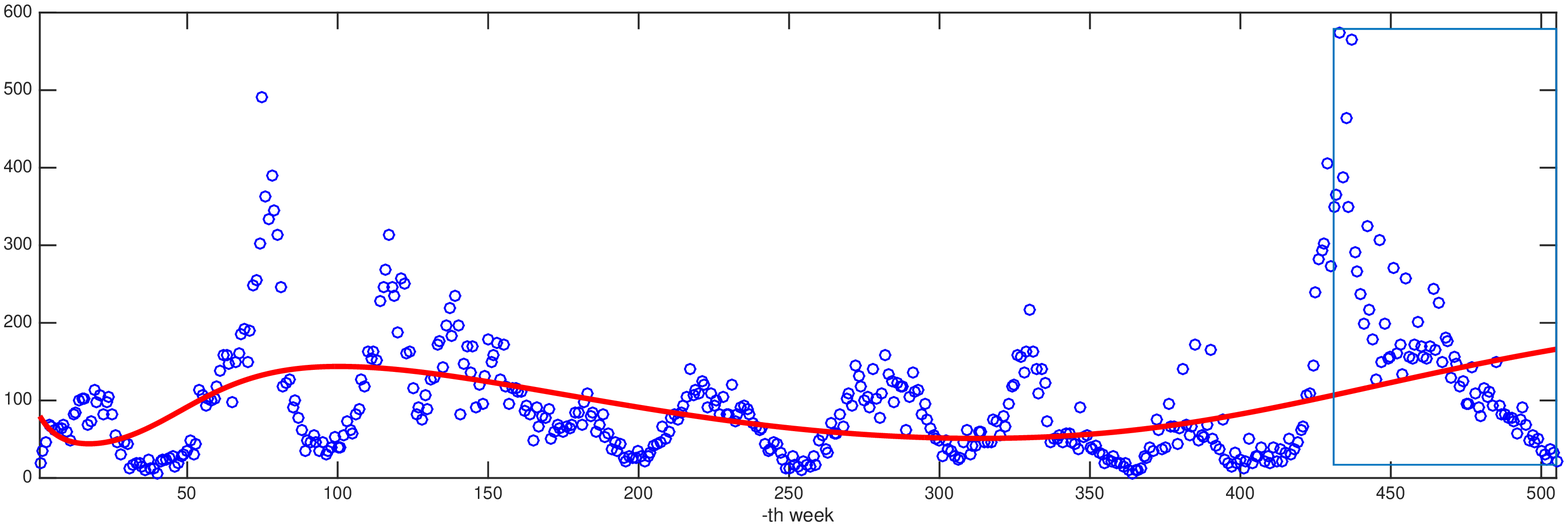}
}
\hfill\hspace*{-0.6cm}
\subfigure[]{
\includegraphics[width=8cm,height=3cm]{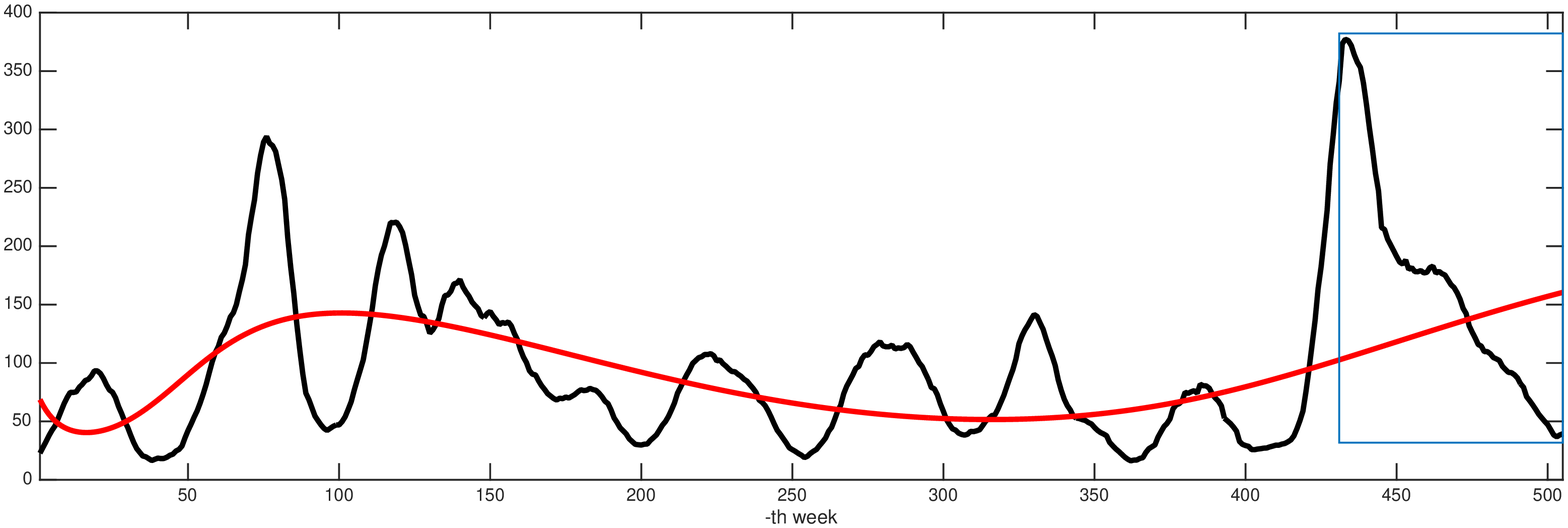}
}
\hfill
\subfigure[]{
\includegraphics[width=8cm,height=3cm]{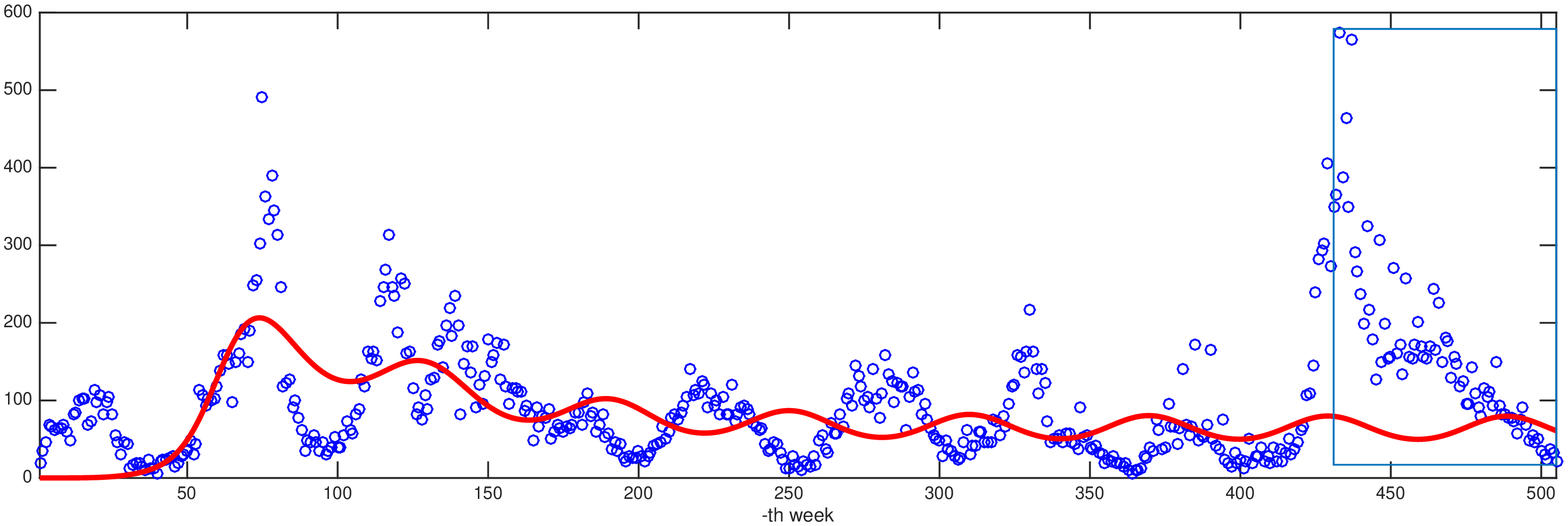}
}
\hfill\hspace*{-0.6cm}
\subfigure[]{
\includegraphics[width=8cm,height=3cm]{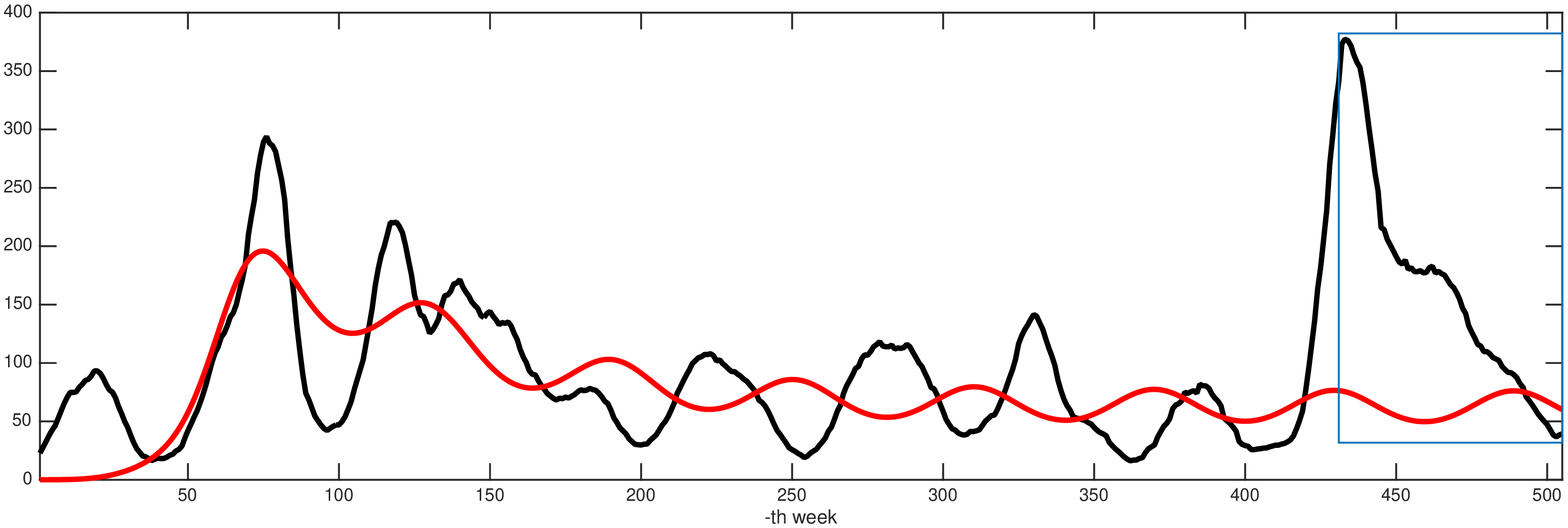}
}
\caption{Data assimilation results using the raw and filtered data without positivity of the recovered compartment for \ref{word:prob1} (a, b), \ref{word:prob2} (c, d), \ref{word:prob3} (e, f), and \ref{word:prob4} (g, h), respectively.}
\label{fig:assimil}
\end{figure}

\begin{figure}[htbp!]
\centering
\subfigure[]{
\includegraphics[width=8cm,height=3cm]{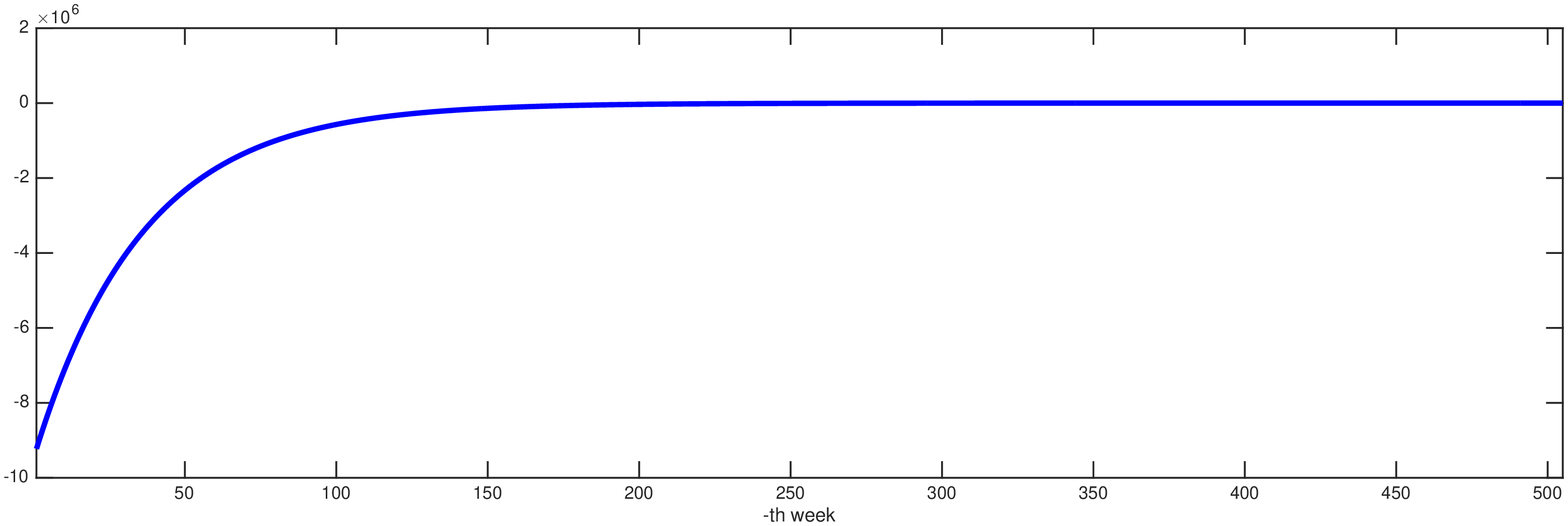}
}
\hfill\hspace*{-0.6cm}
\subfigure[]{
\includegraphics[width=8cm,height=3cm]{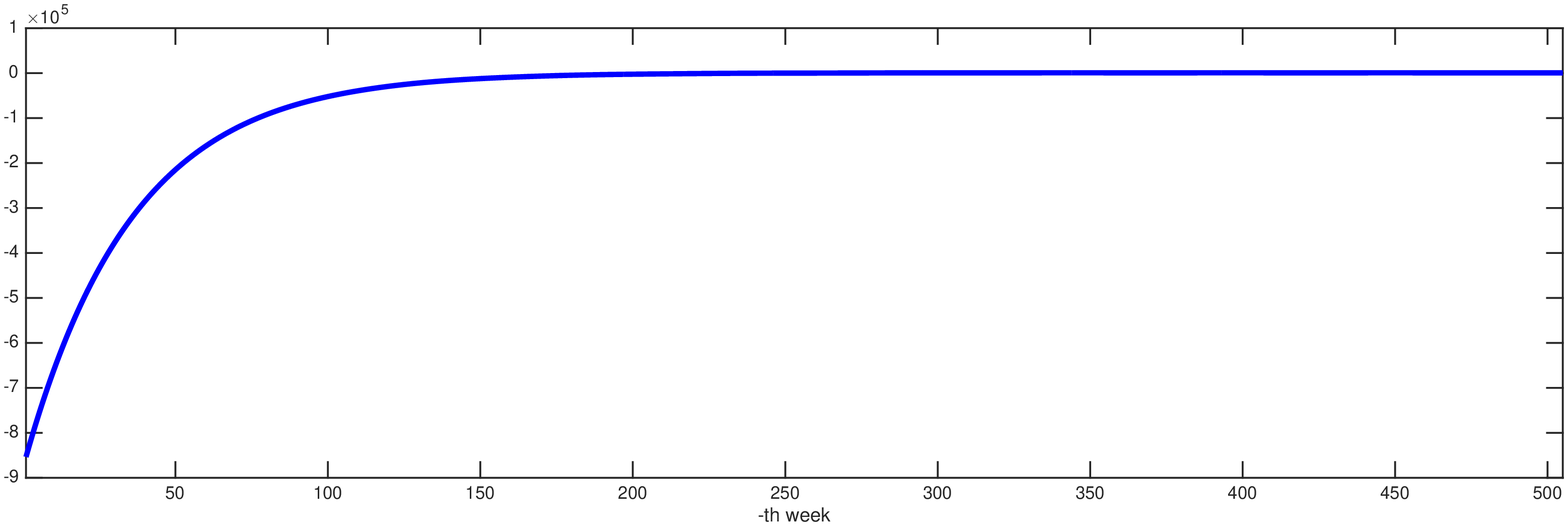}
}
\hfill
\subfigure[]{
\includegraphics[width=8cm,height=3cm]{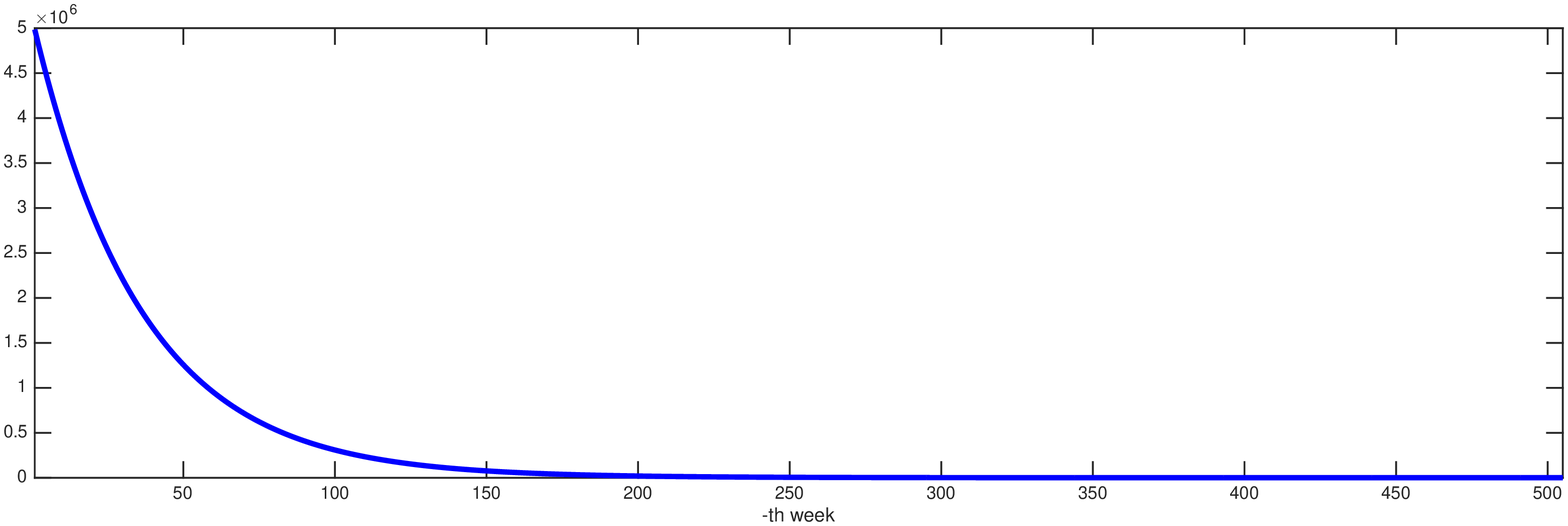}
}
\hfill\hspace*{-0.6cm}
\subfigure[]{
\includegraphics[width=8cm,height=3cm]{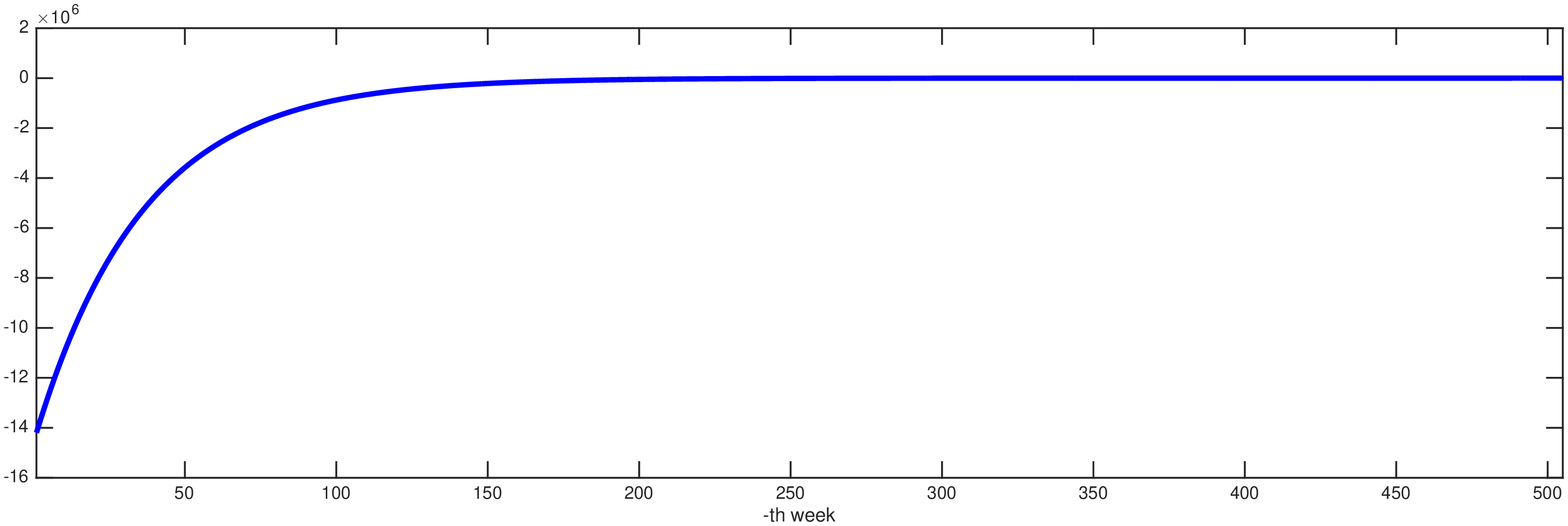}
}
\caption{Trajectories of the recovered compartments without positivity constraint for \ref{word:prob1}--\ref{word:prob4} (a--d).}
\label{fig:Rnegative}
\end{figure}

As a first strategy, we did not include the positivity of the recovered compartment as a constraint, and let the schemes discover the positivity themselves. All the parameters used in the computations are given as in Table~\ref{tab:param}. One important issue in the protocol of SQP is finding a suitable starting point. In our computation, we generated a pool of 50 distinct initial points for every scheme to avoid finding local optimal solutions. At the end of all 400 computations, we cross-checked all the schemes by using the optimal solution of one scheme as the initial point of the other schemes and vice versa. At the end, we performed as many as 456 computations. The results from Fig.~\ref{fig:assimil} show that the first two major outbreaks in 2009 and 2010 were well depicted by the solution, however returning negative recovered compartments. A comparison study also showed that all the schemes have unique results. We noted that~\ref{word:prob1},~\ref{word:prob2},~\ref{word:prob3}, and~\ref{word:prob4} produced the magnitudes of the basic reproductive number $\RM_0=1.060077$, $\RM_0=0.996444$, $\RM_0=1.430692$, and $\RM_0=1.091797$, respectively. Even though the results are fairly representative of the data and show the tendency that dengue fever is always emerging (by the magnitudes of $\RM_0$), those are pointless from our analytical perspective, owing to the appearance of negative solutions; see Fig.~\ref{fig:Rnegative}.

\begin{table}[htbp!]
\centering
\begin{tabular}{cccccccc}
\toprule[1.5pt]
$T$ & $N$ & $\mu$ & $\gamma$ & $\kappa$ & $\omega^{\ast}_{\text{r}}$ & $\omega^{\ast}_{\text{f}}$ & $\epsilon$ \\ \hline
$429$ & $10^{7}$ & $\frac{1}{65\times 12\times 4}$ & $\frac{1}{4}$ & $\frac{1}{9\times 4}$ & 0.017822 & $\omega^{\ast}_{\text{r}}$ & $10^{-2}$ \\ \bottomrule[1.5pt]
\end{tabular}
\label{tab:param}
\caption{All the parameters used for \ref{word:prob1} up to \ref{word:prob4}: $\omega^{\ast}_{\text{r}},\omega^{\ast}_{\text{f}}$ denote the most significant frequencies from the raw data and filtered data, respectively.}
\end{table}

\begin{figure}[htbp!]
\centering
\subfigure[]{
\includegraphics[width=8cm,height=3cm]{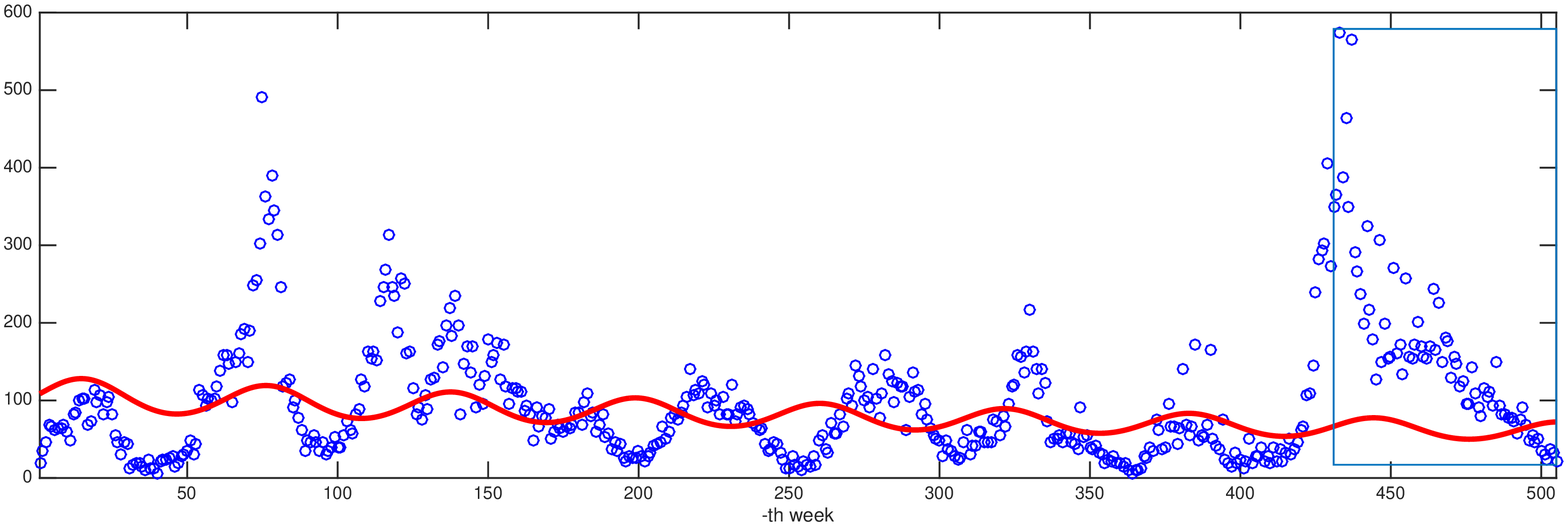}
}
\hfill\hspace*{-0.6cm}
\subfigure[]{
\includegraphics[width=8cm,height=3cm]{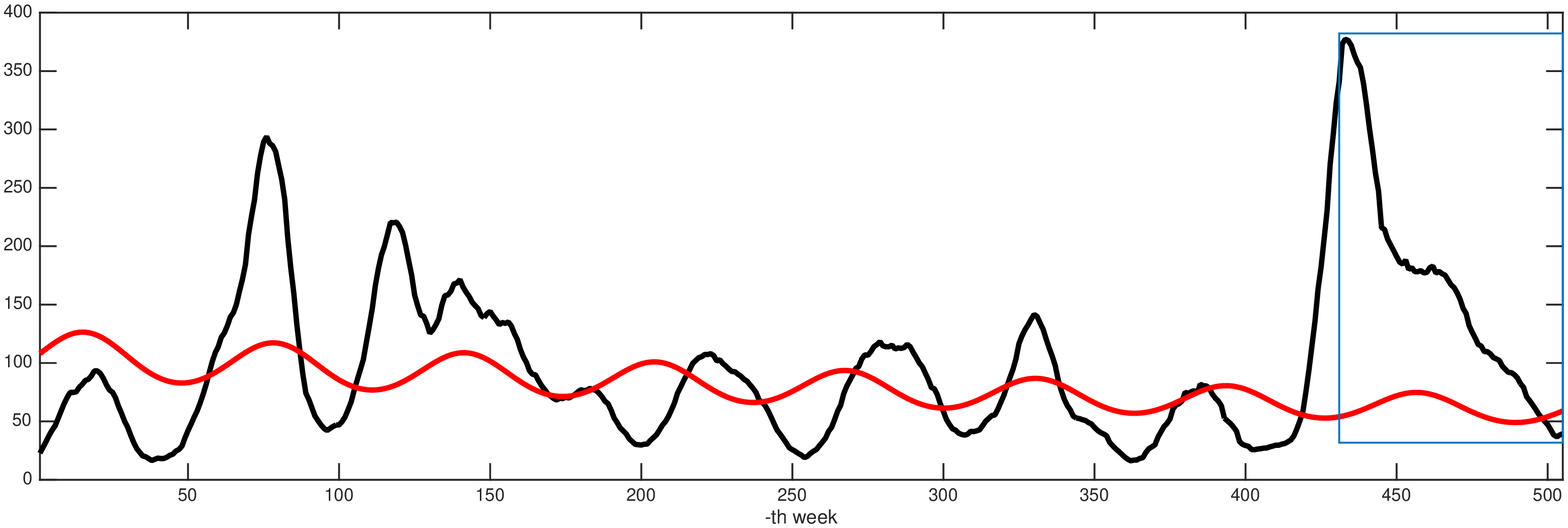}
}
\hfill
\subfigure[]{
\includegraphics[width=8cm,height=3cm]{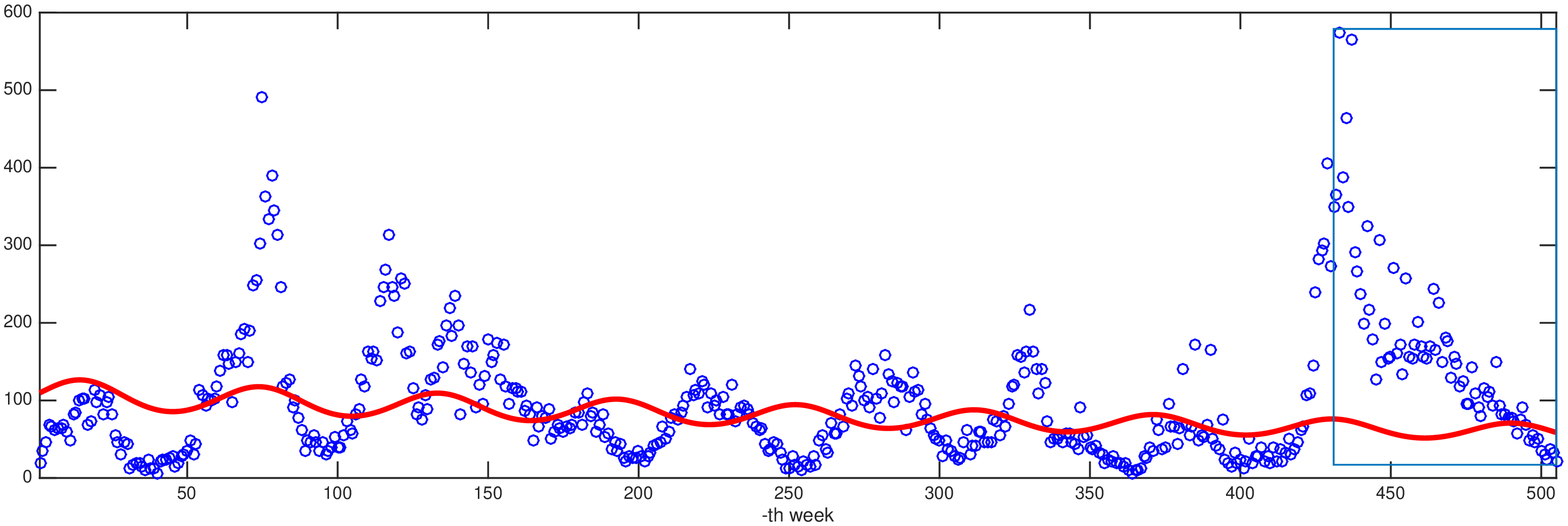}
}
\hfill\hspace*{-0.6cm}
\subfigure[]{
\includegraphics[width=8cm,height=3cm]{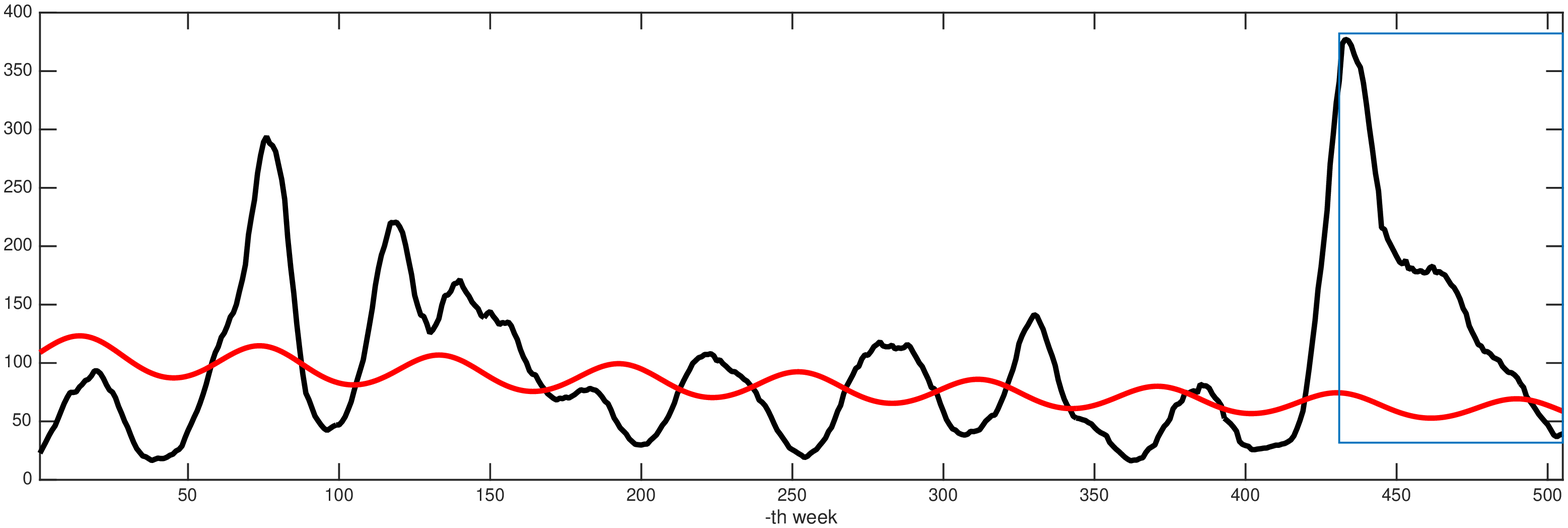}
}
\hfill
\subfigure[]{
\includegraphics[width=8cm,height=3cm]{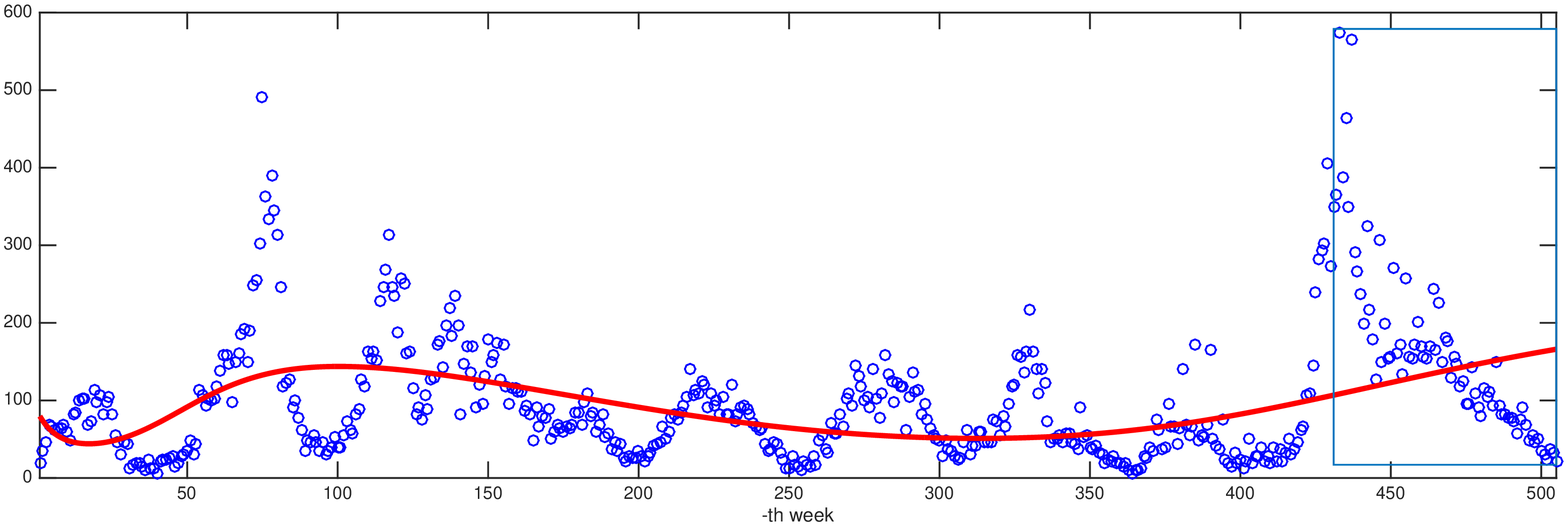}
}
\hfill\hspace*{-0.6cm}
\subfigure[]{
\includegraphics[width=8cm,height=3cm]{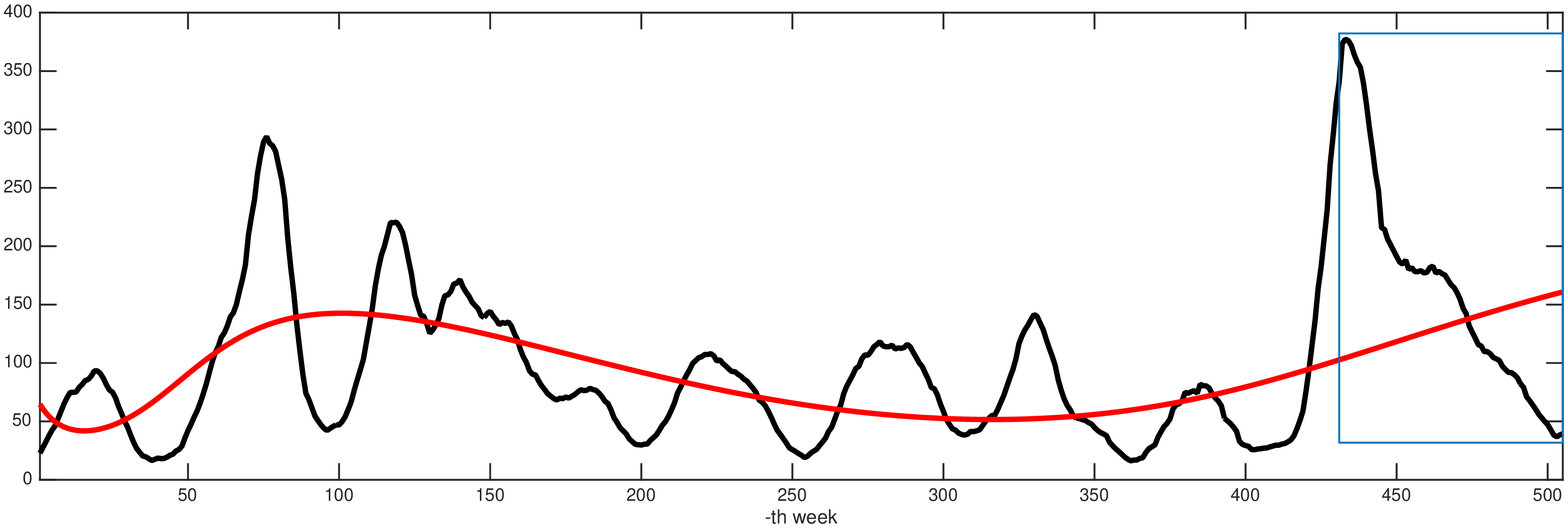}
}
\hfill
\subfigure[]{
\includegraphics[width=8cm,height=3cm]{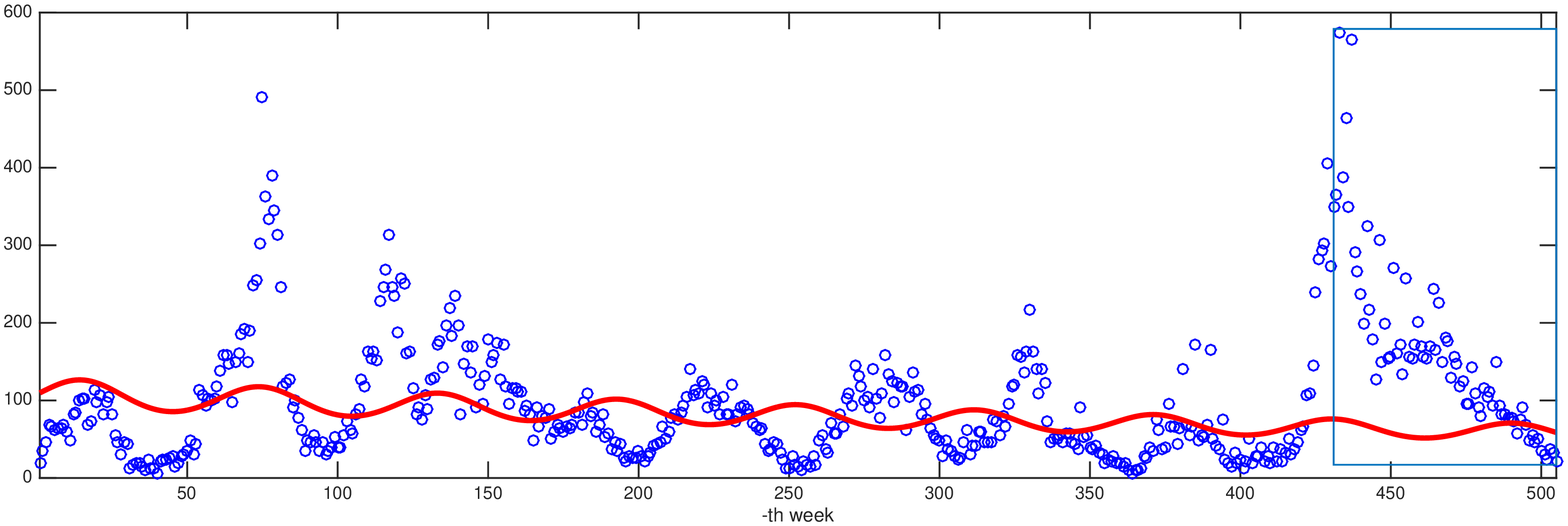}
}
\hfill\hspace*{-0.6cm}
\subfigure[]{
\includegraphics[width=8cm,height=3cm]{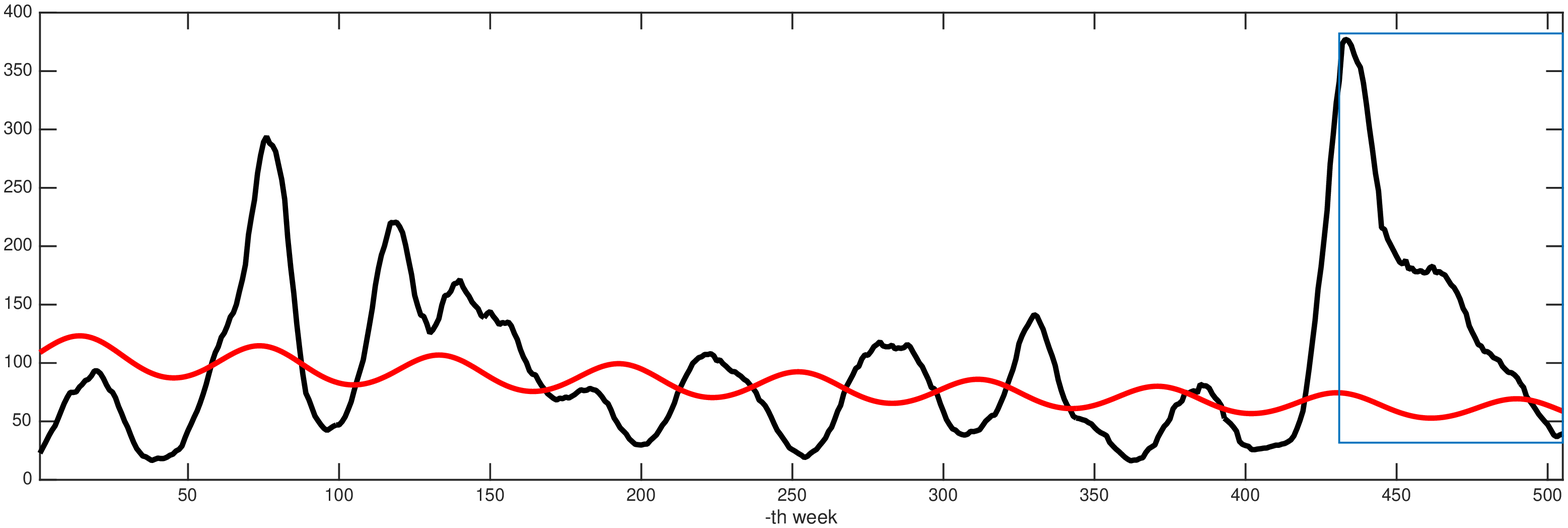}
}
\caption{Data assimilation results using the raw and filtered data with positivity of the recovered compartment for \ref{word:prob1} (a, b), \ref{word:prob2} (c, d), \ref{word:prob3} (e, f), and \ref{word:prob4} (g, h), respectively.}
\label{fig:assimilR}
\end{figure}

\begin{figure}[htbp!]
\centering
\subfigure[]{
\includegraphics[width=8cm,height=3cm]{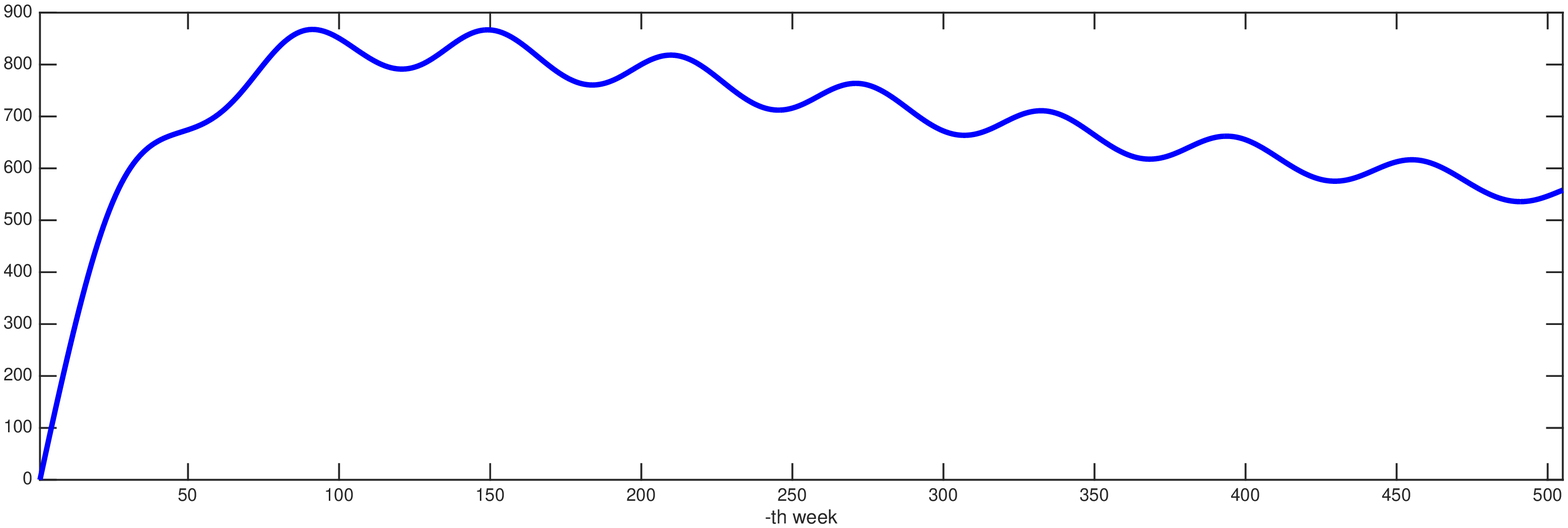}
}
\hfill\hspace*{-0.6cm}
\subfigure[]{
\includegraphics[width=8cm,height=3cm]{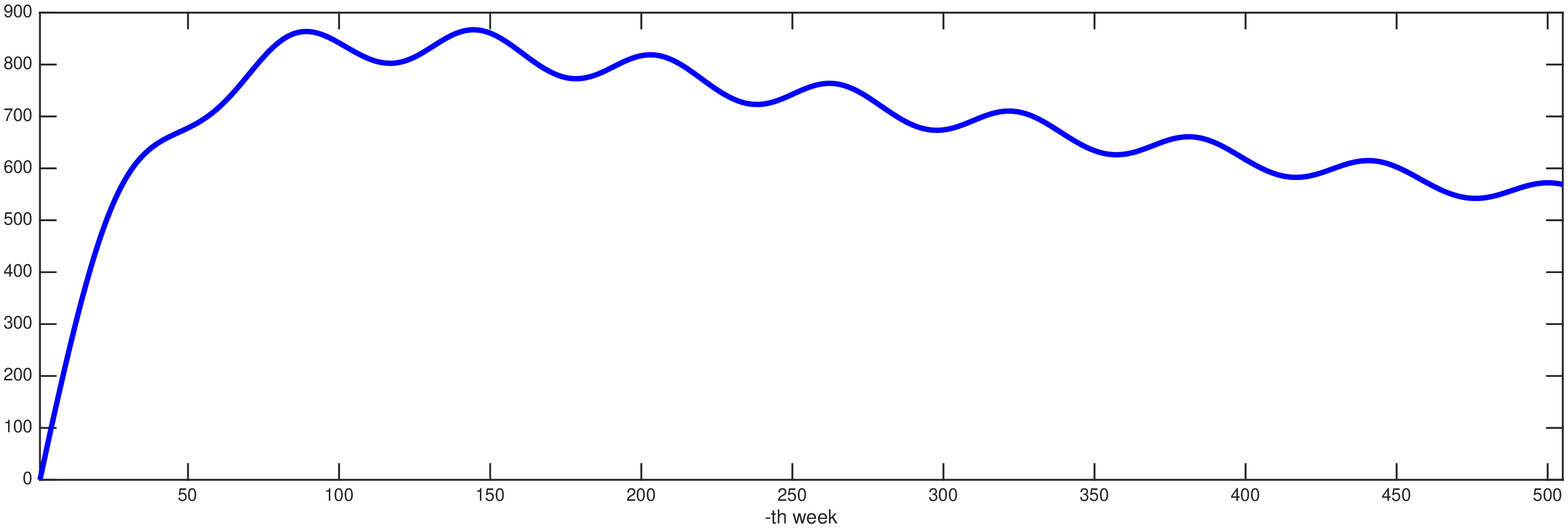}
}
\hfill
\subfigure[]{
\includegraphics[width=8cm,height=3cm]{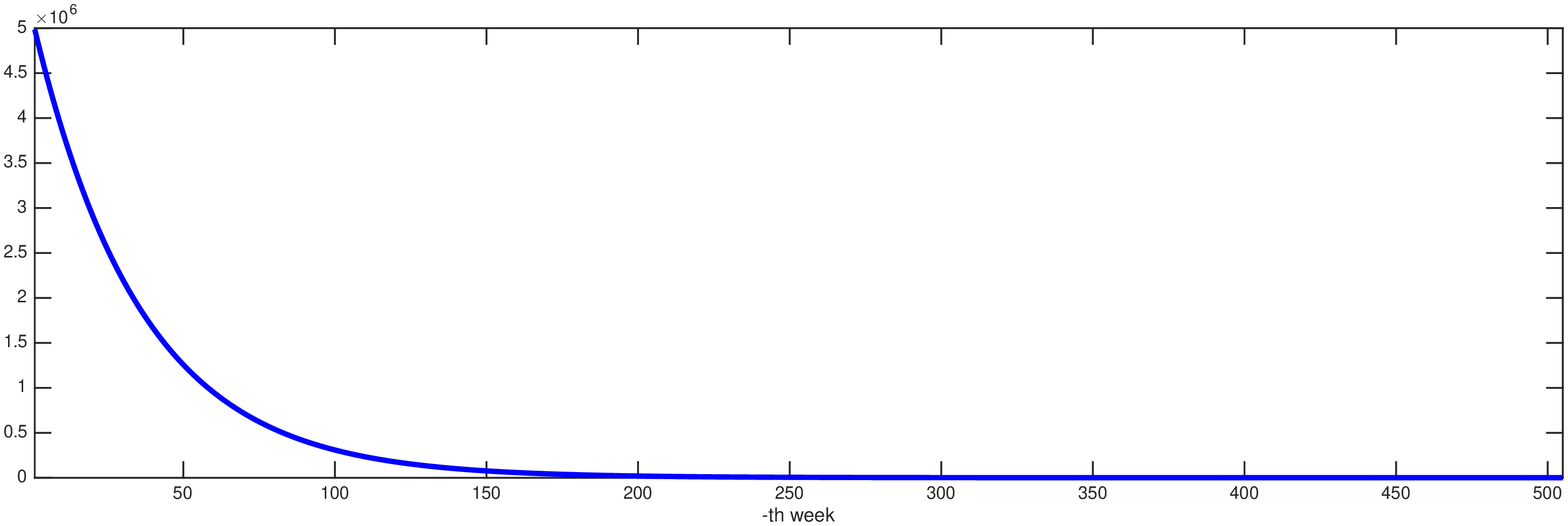}
}
\hfill\hspace*{-0.6cm}
\subfigure[]{
\includegraphics[width=8cm,height=3cm]{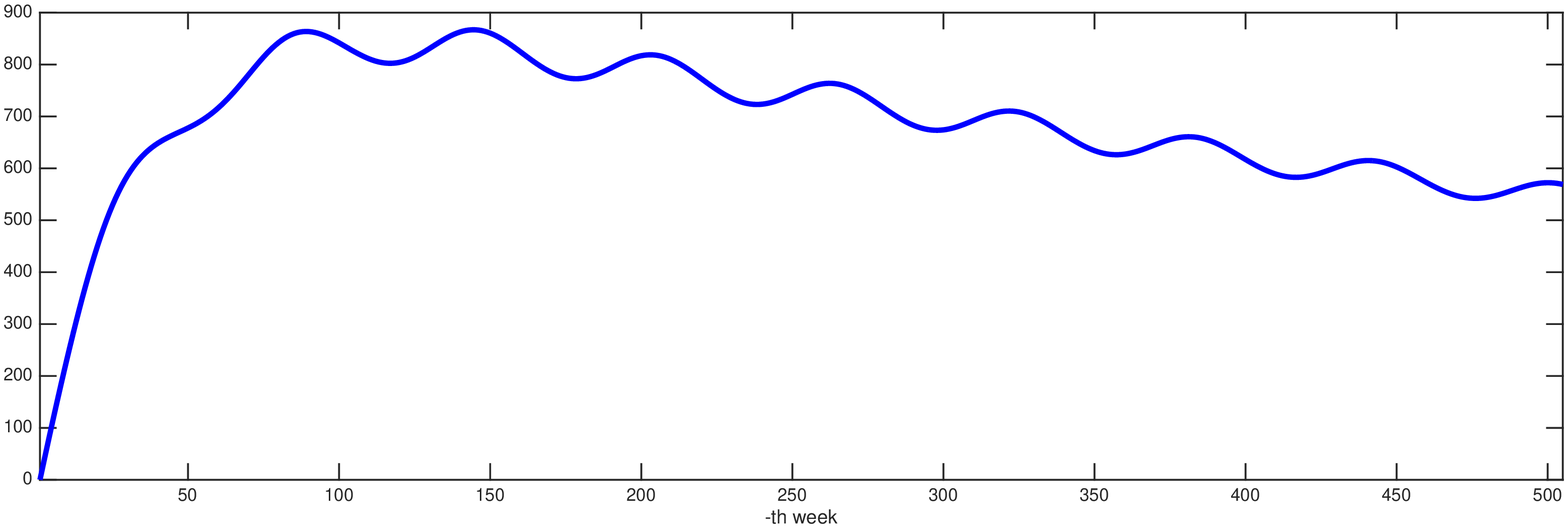}
}
\caption{Trajectories of the recovered compartments with positivity constraint for~\ref{word:prob1}--\ref{word:prob4} (a--d).}
\label{fig:Rpositive}
\end{figure}

In the second strategy, we included the positivity of the recovered compartment as another constraint in all the data assimilation schemes. Note that this constraint can simply be represented by the positivity of the initial condition of $R$, i.e.,\ $R(0)\geq 0$. The results are shown in Fig.~\ref{fig:assimilR}. The figure shows that except for~\ref{word:prob3}, all schemes are more robust as compared with the first strategy in the sense that they produce similar behavior of the optimal solution trajectories. We note that removing noise does not have any significant effect on the final result for each scheme. This is explained according to Table~\ref{tab:result} that all the numerical values of the basic reproductive number $\RM_0$ merely are the same.

\begin{table}[htbp!]
\centering
\resizebox{1\textwidth}{!}{
\begin{tabular}{c|c|c|c|c}
\toprule[1.5pt]
Par., Perf. & \ref{word:prob1} & \ref{word:prob2} & \ref{word:prob3} & \ref{word:prob4} \\ \hline
Raw data & $\left(\begin{array}{c}
1.75\times 10^{2}\\
14.6533\\
0.01630\\
7.01\times 10^{2}\\
1.09 \times 10^{2}\\
6.95 \times 10^{-20}
\end{array}
\right)$, $\left(\begin{array}{c}
1.94\times 10^{6}\\
0.99549
\end{array}
\right)$& $\left(\begin{array}{c}
1.75\times 10^{2}\\
13.2732\\
0.01682\\
7.01\times 10^{2}\\
1.10\times 10^{2}\\
3.13\times 10^{-21}
\end{array}
\right)$, $\left(\begin{array}{c}
1.95\times 10^{6}\\
0.99529
\end{array}
\right)$ & $\left(\begin{array}{c}
1.02\times 10^{-5}\\
1.85\times 10^{-6}\\
1.00172\\
2.85\times 10^{-5}\\
79.6403\\
4.99\times 10^{6}
\end{array}
\right)$, $\left(\begin{array}{c}
1.68\times 10^{6}\\
1.43072\\
10^{4}
\end{array}
\right)$ & $\left(\begin{array}{c}
0.18527\\
0.01409\\
0.01682\\
0.74361\\
1.10\times 10^{2}\\
9.09\times 10^{-22}
\end{array}
\right)$, $\left(\begin{array}{c}
1.95\times 10^{6}\\
0.99530\\
10
\end{array}
\right)$ \\ \hline
Filtered data & $\left(\begin{array}{c}
1.75\times 10^{2}\\
13.5855\\
0.01586\\
7.02\times 10^{2}\\
1.08\times 10^{2}\\
6.58\times 10^{-20}
\end{array}
\right)$, $\left(\begin{array}{c}
1.31\times 10^{6}\\
0.99539
\end{array}
\right)$ & $\left(\begin{array}{c}
1.75\times 10^{2}\\
11.5947\\
0.01682\\
7.01\times 10^{2}\\
1.09\times 10^{2}\\
2.06\times 10^{-20}
\end{array}
\right)$, $\left(\begin{array}{c}
1.33\times 10^{6}\\
0.99533
\end{array}
\right)$ & $\left(\begin{array}{c}
1.06\times 10^{-5}\\
1.81\times 10^{-6}\\
1.00171\\
3.01\times10^{-5}\\
64.6548\\
4.61\times 10^{6}
\end{array}
\right)$, $\left(\begin{array}{c}
1.04\times 10^{6}\\
1.40370\\
10^{4}
\end{array}
\right)$ & $\left(\begin{array}{c}
0.20337\\
0.01350\\
0.01682\\
0.81623\\
1.09\times 10^{2}\\
4.49\times 10^{-19}
\end{array}
\right)$, $\left(\begin{array}{c}
1.33\times 10^{6}\\
0.99534\\
10
\end{array}
\right)$ \\ 
\bottomrule[1.5pt]
\end{tabular}
}
\label{tab:result}
\caption{Optimal parameters $\text{Par.}:=(\alpha,
\delta,
\omega,
\nu,
I_0,
R_0)$ and performance measures $\text{Perf.}:=(\lVert I-\Id\rVert^2_2,\RM_0)$ and $\text{Perf.}:=(\lVert I-\Id\rVert^2_2,\RM_0,\lambda)$ for~\ref{word:prob3} and \ref{word:prob4}, respectively.}
\end{table}

Vanishing $\lambda$ apparently assists us as we avoid having to choose suitable values for it, for the final results are highly sensitive to changes in $\lambda$. This argues that the results are more sensitive to $\lambda$ than the format of the data. Moreover, all the schemes except~\ref{word:prob3} are considered benchmarks in our computations as they are able to pronounce the fluctuations of the data. Apparently, no scheme adequately depicts the high amplitude of the test (predicted) data in the last 32 weeks. Therefore, what remains problematic is if the result $\RM_0<1$ according to Table~\ref{tab:result} is reliable. According to Theorem~\ref{thm:stableIR} and~\ref{item:trivial}, we immediately conclude that the trajectory of the disease incidence will (slowly) tend to zero in the long run. Another issue is that even though all the schemes except~\ref{word:prob3} give some fluctuations, they produce higher extrinsic errors than \ref{word:prob3}, which is why some improvement is needed.

\section{Discussion and outlook}
The numerical framework that we dealt with from the previous section brings a direct connection to the analytical results from Floquet theory. In addition, the framework allows us to have fewer unknown parameters and variables as compared with a framework in which some parameters are completely time varying, thus inducing less computational effort. However, a first disadvantage appears from a brief observation over the numerical results that no single solution from the eight schemes matched well with the given data. This happens since the data were forced to have exactly one significant frequency, which is a stringent condition. One might see incidence data that have only one significant frequency, but a distinct seasonal incidence pattern requires the nature of the solution to have more frequencies and amplitudes, because then the density and estrangement between two adjacent outbreaks can be more accurately depicted. To mitigate this issue, one may use the following model
\begin{equation*}
\beta(t) = \alpha + \sum_{j=1}^m \delta_j\cos(2\pi\omega_{j}t) \quad \text{where }\sigma_j:=1\slash\omega_j,
\end{equation*}
and $m$ denotes the desired number of significant frequencies from the data that are extracted from the Fourier transformation. Unless every quotient $\sigma_j\slash\sigma_k$ returns a rational number, $\beta$ cannot be a periodic function, because then Floquet theory no longer applies. A typical example would be when the Fourier transformation produces $m$ rationally independent significant periods, e.g.,\ with just one period being irrational. Apparently, in any numerical implementation, the Fourier transformation produces periods that are of decimal number with a finite number of digits after the decimal point. This simply keeps us on track. Nonetheless, one idea when an irrational period appears can be to slightly ``shift'' it around its neighborhood for which all the desired periods share rational dependencies. We consider the appearing irrational period as an open research problem. For now, we use the following three frequencies from the data
\begin{equation*}
\omega^{\ast}_{1}=0.005941, \quad \omega^{\ast}_{2}=0.017822, \quad 
\omega^{\ast}_{3}=0.0237624,\\
\end{equation*}
to obtain solutions as described in Fig.~\ref{fig:assimilmultR} with the numerical results~\eqref{eq:par}--\eqref{eq:perf}. Note that we require the additional constraint $\alpha\geq \sum_{j=1}^m|\delta_j|$ to keep $\beta$ nonnegative. From the figure, we clearly see that the historical outbreaks are more naturally depicted by the solutions. The scheme was also able to suggest an outbreak within the test time window, which happens to depict the actual outbreak (even though it appears with an underestimation). This is at least in contrast to the previous strategies (Fig.~\ref{fig:assimil}--\ref{fig:assimilR}). The numerical results also suggest that the basic reproductive number $\RM_0$ is slightly greater than one, whereas $\RM_{\max}$ is slightly less than one. Accordingly, there exists a nontrivial periodic solution that is locally asymptotically stable. This analytical finding is then supported by the numerical realization of the system solution on an extended time domain as in Fig.~\ref{fig:solmult}:

\begin{figure}[htbp!]
\centering
\subfigure[]{
\includegraphics[width=8cm,height=3cm]{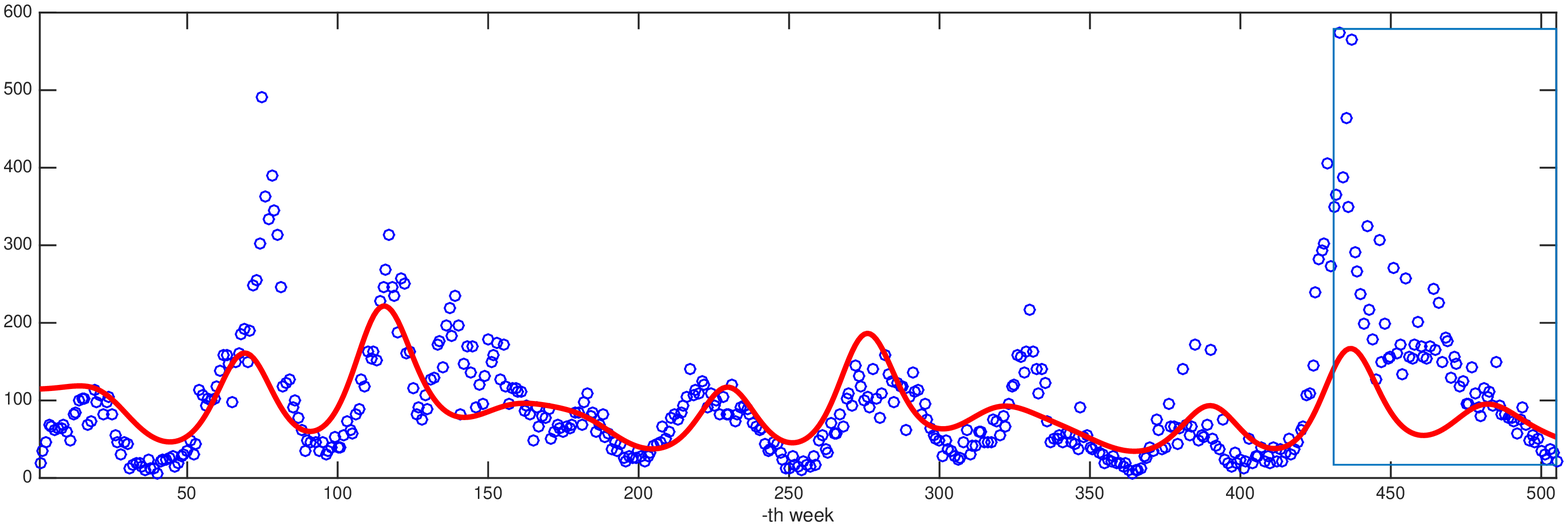}
}
\hfill\hspace*{-0.6cm}
\subfigure[]{
\includegraphics[width=8cm,height=3cm]{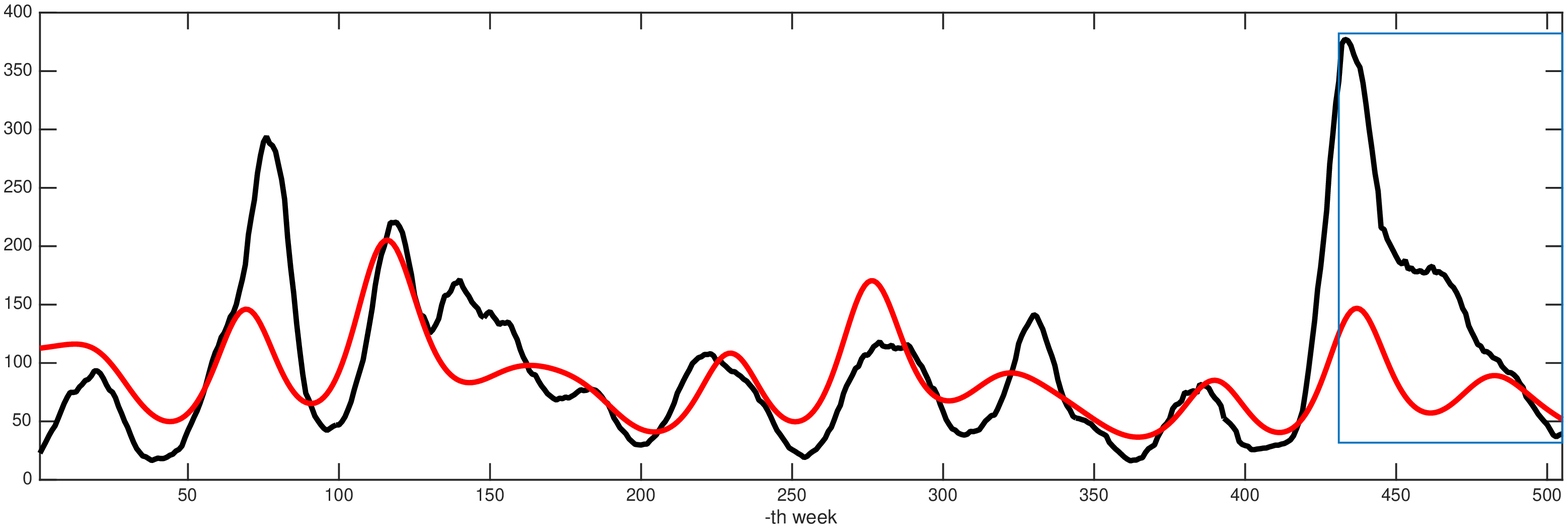}
}
\hfill
\subfigure[]{
\includegraphics[width=8cm,height=3cm]{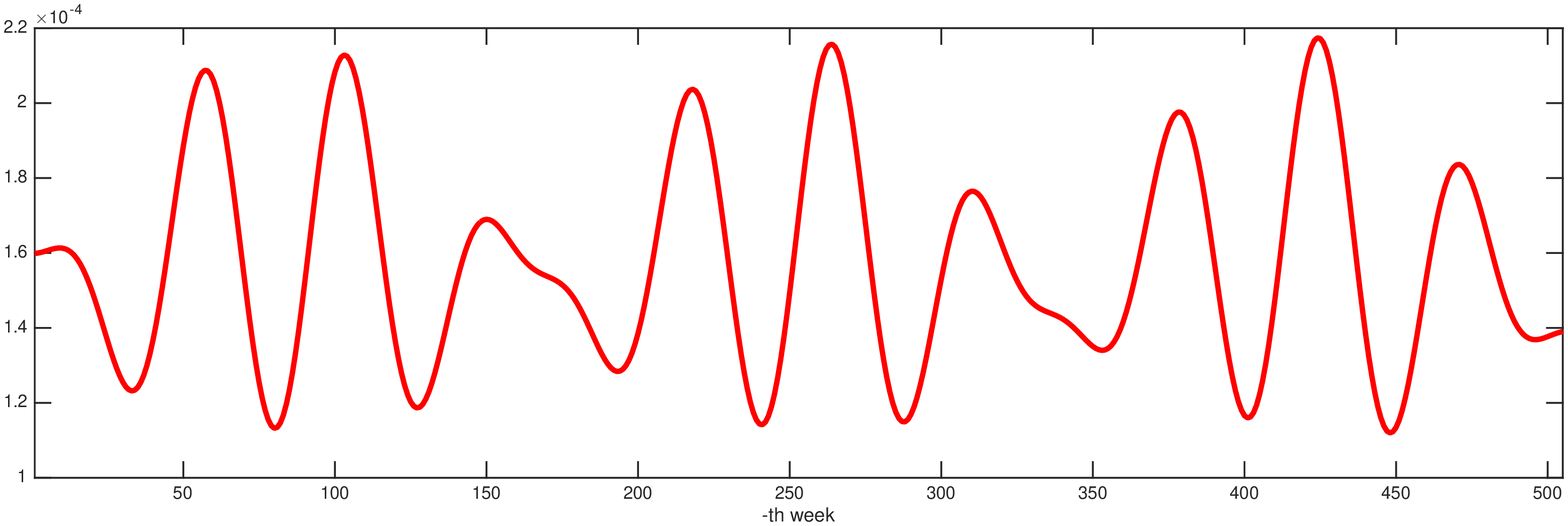}
}
\hfill\hspace*{-0.6cm}
\subfigure[]{
\includegraphics[width=8cm,height=3cm]{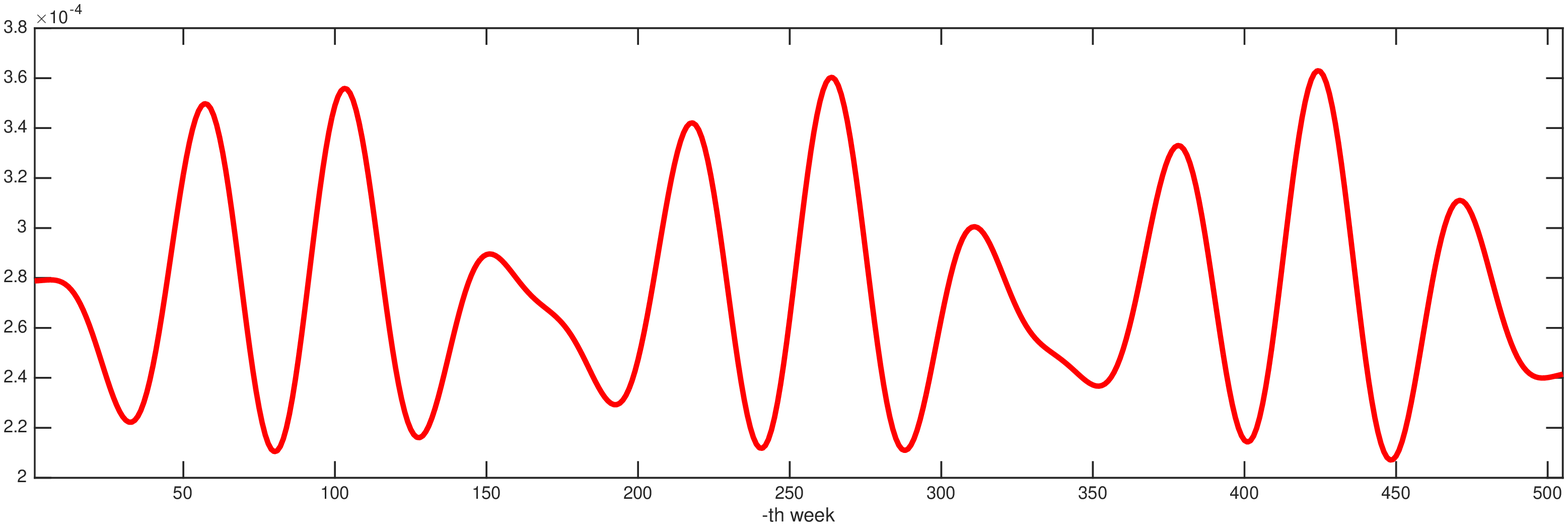}
}
\hfill
\subfigure[]{
\includegraphics[width=8cm,height=3cm]{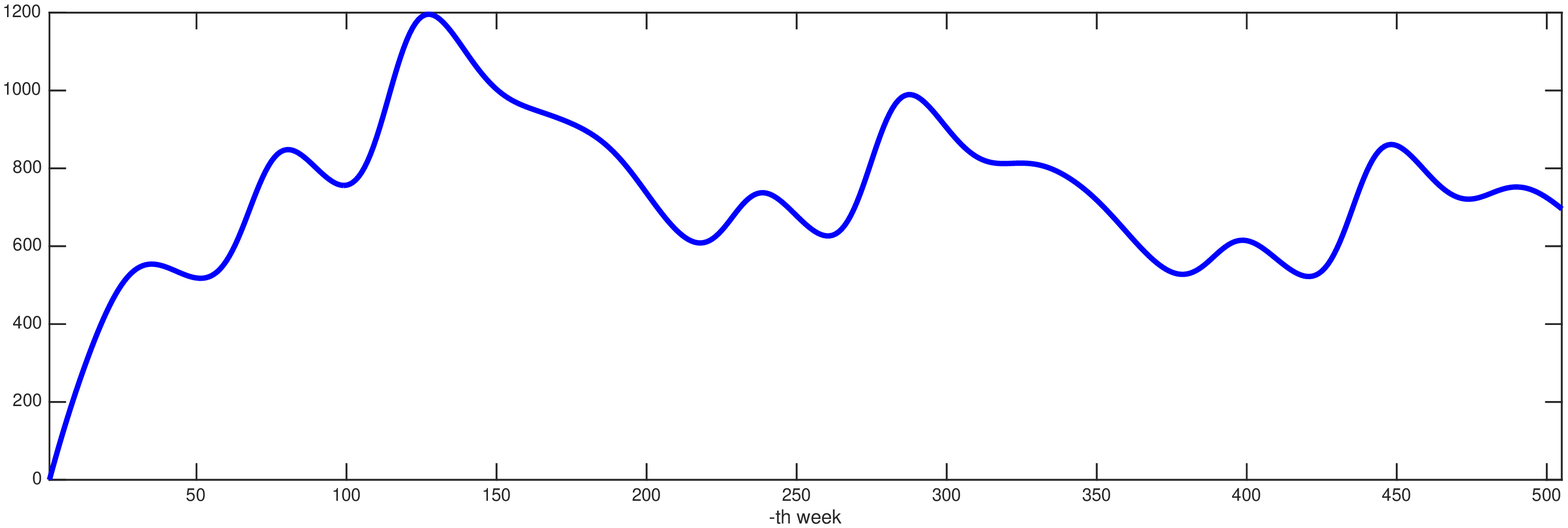}
}
\hfill\hspace*{-0.6cm}
\subfigure[]{
\includegraphics[width=8cm,height=3cm]{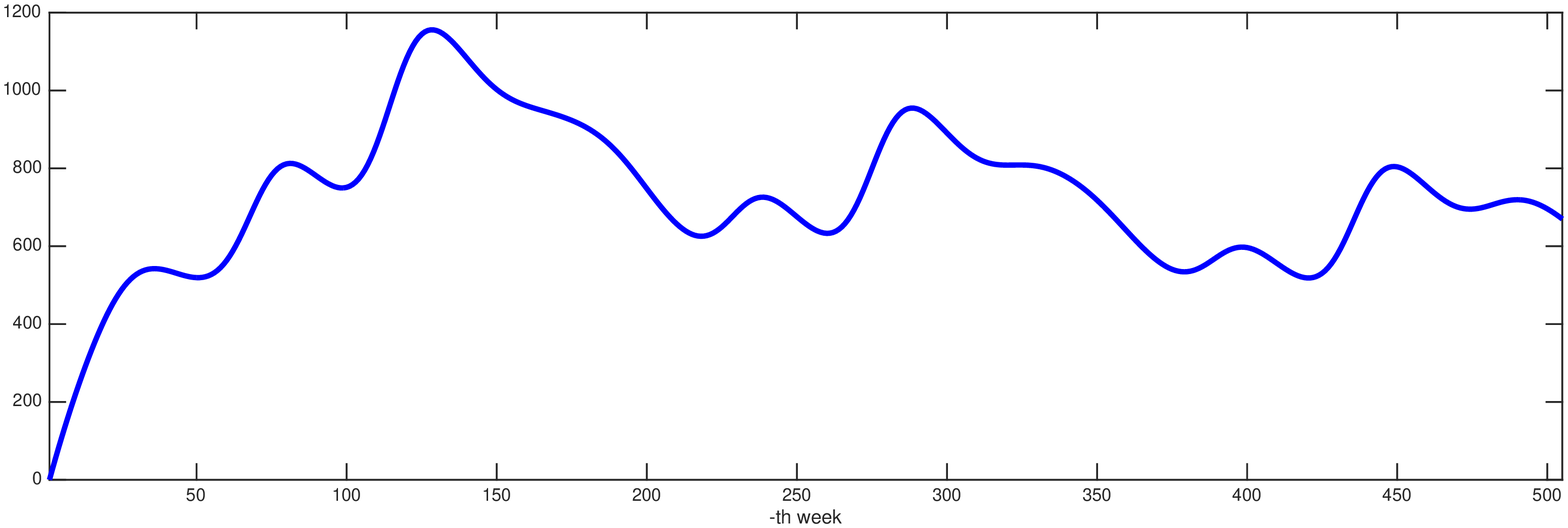}
}
\caption{Data assimilation results using the raw (a,c,e) and filtered data (b,d,f) with the positivity of the recovered compartment: the first, second, and third lines depict the optimal trajectory of the infected compartment, the optimal trajectory of $\beta$, and that of the recovered compartment.}
\label{fig:assimilmultR}
\end{figure}


\begin{figure}[htbp!]
\centering
\includegraphics[width=17cm,height=4.2cm]{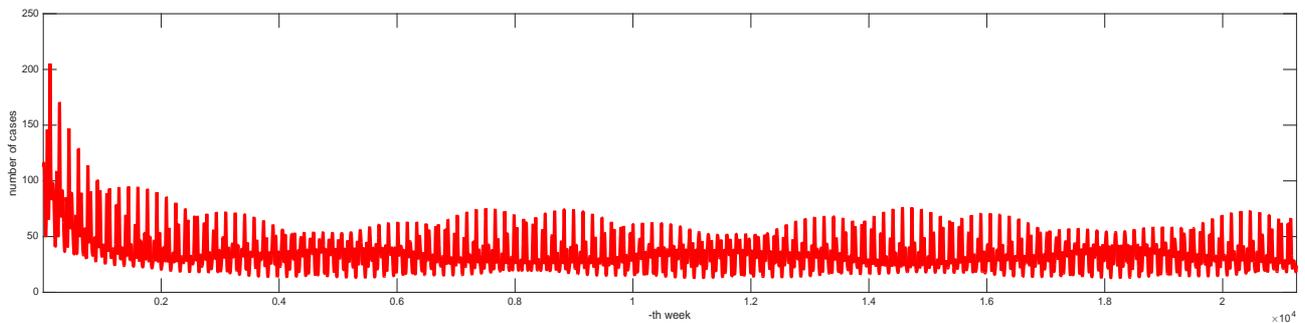}
\caption{Extension of the trajectory of the infected compartment from Fig.~\ref{fig:assimilmultR} (a).}
\label{fig:solmult}
\end{figure}

\begin{equation}\label{eq:par}
\left(\begin{array}{c}
\alpha\\
\delta_1\\
\omega_1\\
\delta_2\\
\omega_2\\
\delta_3\\
\omega_3\\
\nu\\
I_0\\
R_0\\
\end{array}
\right)_{\text{raw}}=\left(\begin{array}{c}
1.57434\times 10^{-4}\\
-8.50356\times 10^{-6}\\
0.00609\\
3.18808\times 10^{-5}\\
0.01882\\
-2.09876\times 10^{-5}\\
0.02476\\
6.23095\times 10^{-4}\\
1.14701\times 10^{2}\\
5.16297\times 10^{-21}\\
\end{array}
\right)\text{ and }
\left(\begin{array}{c}
\alpha\\
\delta_1\\
\omega_1\\
\delta_2\\
\omega_2\\
\delta_3\\
\omega_3\\
\nu\\
I_0\\
R_0\\
\end{array}
\right)_{\text{filtered}}=\left(\begin{array}{c}
2.73535\times 10^{-4}\\
-1.41929\times 10^{-5}\\
0.00609\\
4.84729\times 10^{-5}\\
0.01882\\
-2.90086\times 10^{-5}\\
0.02476\\
0.00109\\
1.12664\times 10^{2}\\
3.61013\times 10^{-20}\\
\end{array}
\right),
\end{equation}

\begin{equation}\label{eq:perf}
\resizebox{0.9\textwidth}{!} 
{ $
\left(\begin{array}{c}
\lVert I-I^{\text{DATA}}\rVert_2^2\\
\RM_0\\
\RM_{\max}
\end{array}
\right)_{\text{raw}}=\left(\begin{array}{c}
1.33050\times 10^{6}\\
1.00937\\
0.99891
\end{array}
\right)\text{ and }
\left(\begin{array}{c}
\lVert I-I^{\text{DATA}}\rVert_2^2\\
\RM_0\\
\RM_{\max}
\end{array}
\right)_{\text{filtered}}=\left(\begin{array}{c}
8.26737\times 10^{5}\\
1.00319\\
0.99307
\end{array}
\right).
$}
\end{equation}

One limitation of this work is that we did not perform a secondary analysis of the data, as they were directly put into the modeling. However, the daily epidemic data were based on the hospitalized cases only. Underreporting may occur for which some people may come to the local polyclinics (\emph{Puskesmas}), and were not referred to the hospitals due to asymptomatic infections. As far as the IR model is concerned, one should note that the model only describes the dynamics of the human population, considered to be constant over time. This assumption slightly violates the fact that the number of people living in Jakarta increases from time to time \cite{SI2016}. The advantages of using the IR model are that it induces simplicity in the numerical optimization during data assimilation, and the existence--stability results can be easily derived, which can be helpful for further control management. With the Floquet theory, one also requires the least effort to model seasonality over some climatic parameters that can naturally be shown from the behavior of the data. The results from this research may provide local authorities with small time to take necessary decisions and actions to safeguard the situation. However, since the results completely rely on the internal processes of the model, the stability results can be different from model to model. The authorities may blame the modeler directly once the prediction of outbreaks derived from a single model misinterpret the actual outbreaks. Caution should be taken when interpreting the analytical results of the model by its uniqueness, unless we can produce a more reasonable model. 

Further development of the model, which will most likely hinder from obtaining analytically sound results, could include the dynamics of host and vector in the six subregions. This will allow for a better measurement over the spatial distribution of populations. Using such a network model would allow the flow of people for work or regular visits to other zones to be modeled, as this has been considered an important factor in the spread of dengue \cite{NS2014,BDO2016,FSM2005}. Another interesting issue to address when investigating network-based models is modeling the human mobility between zones if mobility data are not known. Previous studies proposed the so-called radiation model \cite{SGM2012,SGM2016} and gravity model \cite{MSJ2013} to model human mobility on a network. However, by more zones the number of dynamic states can easily grow, which significantly increases the computational effort required for data assimilation. Notwithstanding the simplicity of the existence and stability theorems from Floquet theory, this model can less likely elicit explicit stability analysis results.

For more reasonable forecasts, the magnitude of $\beta$ can be extended by revealing its close relationship with the available data of extrinsic factors. This basically aims at characterizing the relationship between dengue incidence and the extrinsic factors. Further work might include collecting reasonable extrinsic factors that contribute to the emergence of dengue; for example, water precipitation, air humidity, wind speed, visibility, and coastal temperature (related to the occurrence of El Ni\~{n}o). Now the question remains how one can relate $\beta$ with those extrinsic factors, for which we proposed a simple idea in this paper. Let $E_k$, $k=1,\cdots,K$, denote the chosen extrinsic factors and let $\beta$ be expressed as $\beta=\alpha+\sum_{j=1}^m\delta_j\beta_j$ where $\beta_j=\cos(2\pi\omega_j t)$. The basic idea lies in the normalization of all the aforementioned variables such that they heavily correlate, because then a linear relationship can be inferred; see Fig.~\ref{fig:transform}. 

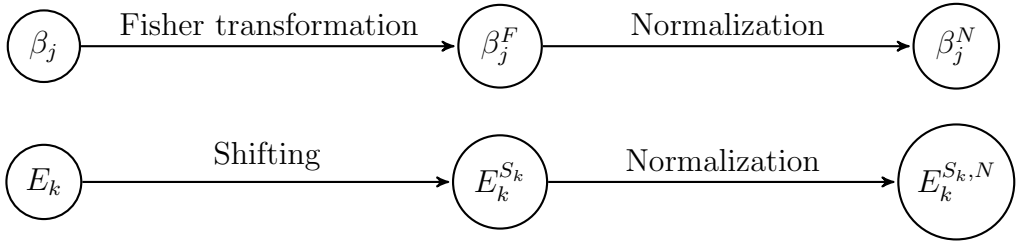
\begin{figure}[htbp!]
\centering
\begin{tikzpicture}[->,>=stealth',shorten >=1pt,auto,
                thick,main node/.style={circle,draw}]

  \node[main node] (1) {$\beta_j$};
  \node[main node] (2) [right of=1,xshift=5cm] {$\beta^F_j$};
  \node[main node] (3) [right of=2,xshift=5cm] {$\beta^N_j$};
  \node[main node] (4) [below of=1,yshift=-0.8cm] {$E_k$};
  \node[main node] (5) [right of=4,xshift=5cm] {$E^{S_k}_{k}$};
  \node[main node] (6) [right of=5,xshift=5cm] {$E^{S_k,N}_{k}$};

  \path
    (1) edge [] node [above] {Fisher transformation} (2)
    (2) edge node [above] {Normalization} (3)
    (4) edge node [above] {Shifting} (5)
    (5) edge node [above] {Normalization} (6);      
\end{tikzpicture}
\caption{Transformations of the optimal $\beta_j$ and all the collected extrinsic factors $E_k$.}
\label{fig:transform}
\end{figure}

Since $\beta_j\in [-1,1]$ for all time, the so-called Fisher transformation $\beta_j\mapsto\frac{1}{2}\log \left\lvert\frac{1+\beta_j}{1-\beta_j}\right\rvert=:\beta^F_j$ helps map $\beta_j$ into another variable $\beta_j^F$ that is normally distributed. According to \cite{BB1999}, $\beta^F_j\stackrel{\text{i.i.d.}}{\sim}N\left(\frac{1}{2}\log\left\lvert \frac{1+\rho_j}{1-\rho_j}\right\rvert,\frac{1}{n-3}\right)$, where $\rho_j$ denotes the Pearson product moment correlation coefficient and $n$ denotes the number of discrete time points used. Subtracting $\beta^F_j$ by its mean and dividing it by its standard deviation gives us the normalized variable $\beta^N_j\stackrel{\text{i.i.d.}}{\sim}N\left(0,1\right)$. As far as any extrinsic factor $E_k$ is concerned, our aim is to find a time shift such that the shifted normalized version of $E_k$, $E^{S_k,N}_{k}\stackrel{\text{i.i.d.}}{\sim}N\left(0,1\right)$, achieves its highest correlation with the normalized $\beta$. Previous studies suggest that there should be some time delay before any extrinsic factor leads to the most favorable condition for the mosquitoes to breed \cite{HRW2012,WJJ2013,WG2017}. With all the variables being normalized, we propose the following ansatz
\begin{equation*}
\beta^N_j \approx \sum_{k=1}^K\varepsilon_{jk}E^{S_k,N}_{k}\quad\text{where }\sum_{k=1}^K\varepsilon_{jk}^2=1\text{ for each }j,
\end{equation*}
according to the properties of the normal distribution. Now the problem remains to find which $\varepsilon_j=(\varepsilon_{jk})_{k=1}^K$ solves the following problem (for each $j$)
\begin{equation*}
\begin{aligned}
& \underset{\varepsilon_j\in\R^K}{\text{minimize}}
& &\frac{1}{2}\left\lVert \beta^N_j- \sum_{k=1}^K\varepsilon_{jk}E^{S_k,N}_{k}\right\rVert^2,\\
& \text{subject to}
& & \sum_{k=1}^K\varepsilon_{jk}^2=1.
\end{aligned}
\end{equation*}
Finally, restoring everything to the formulation of $\beta$, we obtain the following model
\begin{equation*}
\beta=\alpha+\sum_{j=1}^m\delta_j\frac{\exp(2\beta^F_j)-1}{\exp(2\beta^F_j)+1} \quad\text{where }\beta^F_j=\frac{1}{2}\log\left\lvert \frac{1+\rho_j}{1-\rho_j}\right\rvert + \sqrt{\frac{1}{n-3}}\sum_{k=1}^K\varepsilon_{jk}E^{S_k,N}_{k}.
\end{equation*}
We did not aim in this study to perform the last task, which is left for future work. As a warm-up to this, including the coastal temperature data might allow the underestimation of the outbreak produced by the last scheme as in Fig.~\ref{fig:assimilmultR} to be corrected.

\section*{Acknowledgments}
This research was financially supported by the Indonesia Ministry of Research and Higher Education (Kemenristek DIKTI), with PUPT research grant scheme 2017. The authors would like to show gratitude to Kartika Anggun Dimar Setio from the Faculty of Public Health, University of Indonesia for providing access to the dengue incidence data from the Jakarta Health Office, and to anonymous reviewers for their comments that greatly improved the manuscript. Part of the contents in this paper was presented in the project seminar at the University of Koblenz by the third author.


\section*{Appendix}
\appendix

\section{Proof of Theorem~\ref{thm:existperiodic}}
\label{app:exist}
Let $x_0\in U_1(x^{\ast})$ and $\delta>0$ be defined as to hold $|\delta|<\alpha_{1}$ for some $\alpha_1$ such that a unique solution $x=x(t,x_0,\delta)$ exists. 
Let $S( x_0,\delta):= x(\sigma,x_0,\delta)- x_0$. Since the whole vector field $f$ is continuous in time, state, and on $(-\alpha_{1},\alpha_{1})$, $S$ is continuously differentiable there. Observe that $S(x^{\ast},0)=0$ by the definition of the equilibrium. We have to calculate $\nabla_{ x_0}S(x^{\ast},0)$ for the interest of investigating the eigenvalues of the gradient of the autonomous function $g$, yet further explanation settles. To do so, we require investigating the function $\zeta(t):=\nabla_{x_0} x(t, x^{\ast} ,0)$. We note that $\nabla_{x_0} x(t, x_0,\delta)$ satisfies the following variational equation
\begin{equation*}
\frac{\dd}{\dd t}\nabla_{ x_0} x(t, x_0,\delta)=\nabla_{ x_0} f(t, x(t, x_0,\delta),\delta)=\nabla_{ x} f(t, x(t, x_0,\delta),\delta)\nabla_{ x_0} x(t, x_0,\delta),
\end{equation*}
with $\nabla_{ x_0} x(0, x_0,\delta)=\text{id}$ from which $\zeta$ satisfies
\begin{equation*}
\frac{\dd}{\dd t}\zeta=\nabla_{ x} g(x^{\ast})\zeta \quad \text{with }\zeta(0)=\text{id}.
\end{equation*}
The last result gives us the desired $\nabla_{x_0}S(x^{\ast},0)=\zeta(\sigma)-\text{id}=\exp(\nabla_{ x} g(x^{\ast})\sigma)-\text{id}$. Let $v$ be an eigenvector of $\nabla_{ x} g(x^{\ast})$ that associates with an eigenvalue $\lambda$. The Taylor expansion for matrix exponentials gives us $(\exp(\nabla_{ x}g(x^{\ast})\sigma)-\text{id}) v=(\exp(\sigma\lambda)-1) v$, which allows us to know that $\exp(\sigma\lambda)-1$ is an eigenvalue of $\nabla_{x_0}S(x^{\ast},0)$. Now, in case all eigenvalues $\lambda$ of $\nabla_{ x} g(x^{\ast})$ satisfy $\sigma\lambda \notin 2\pi i\mathbb{Z}$, we immediately assure that $\det(\nabla_{x_0}S(x^{\ast},0))=\prod_{\lambda}\exp(\sigma\lambda)-1\neq 0$, there $\nabla_{ x_0}S(x^{\ast},0)$ has some bounded inverse. By the implicit function theorem, there exist a domain $U_2(x^{\ast})\times(-\alpha_{2},\alpha_{2})$ and a continuously differentiable function $x_0(\delta)$ for $(\delta, x_0(\delta))$ defined on this domain such that $S(x_0(\delta),\delta)=0$ or eventually $ x(\sigma, x_0(\delta),\delta)= x_0(\delta)$. Since $f$ is $\sigma$-periodic over time, then $x(t+\sigma, x_0(\delta),\delta)= x(t, x_0(\delta),\delta)$ if and only if $x(\sigma, x_0(\delta),\delta)= x_0(\delta)$. The desired domain follows from letting $U(x^{\ast}):=U_1(x^{\ast})\cap U_2(x^{\ast})$ and $\alpha$ such that $(-\alpha,\alpha)\subset (-\alpha_{1},\alpha_{1})\cap(-\alpha_{2},\alpha_{2})$.

\section{Proof of Theorem~\ref{thm:stableperiodic}}
\label{app:stable}
By specifying $y:= x-\phi$ and $m(t, y) :=  f(t, y+\phi)- f(t,\phi)-A(t) y$ where $A(t):=\nabla_{ x} f(t,\phi)$, one assures that $y$ satisfies $\dot{y}=A(t) y+ m(t, y)$ 
where $ m(t,0)=0$ and $\nabla_{ y} m(t,0)=0$ by the chain rule. Observe that $m$ is $\sigma$-periodic over time. Let $Z(t)$ be the fundamental matrix that corresponds to the system matrix $A(t)$. Calculating the solution $y$ we obtain the following abstraction
\begin{align*}
y(t)&= Z(t) y_0+\int_0^tZ(t)Z(s)^{-1} m(s, y)\,\dd s\\
&=P(t)\exp(Qt)y_0 + \int_0^tP(t)\exp(Q(t-s))P(s)^{-1}m(s,y)\,\dd s.
\end{align*}
We know that $P(t)$ is continuously differentiable, which leads to its boundedness. We also know that $P(t)$ is uniformly nonsingular due to the nonsingularity of the fundamental matrix. The fact that multiplications and additions of some continuous periodic functions with uniperiod return a continuous periodic function guarantees that the determinant of $P(t)$ is uniformly continuous and $\sigma$-periodic. Together with continuous $\sigma$-periodic elements of the adjoint matrix of $P(t)$ gives the continuous $\sigma$-periodic inverse $P(t)^{-1}$. Let us denote $C_1$ and $C_2$ as upper bounds for $\lVert P(t)\rVert$ and $\max_{0\leq s\leq t}\lVert P(s)^{-1}\rVert$, respectively, for all $t\geq 0$, which make sense due to the continuity of matrix norms. Let us denote by $-\lambda$, for which $\lambda>0$, an upper bound for all the eigenvalues of $Q$, provided that \ref{item:floquet} is satisfied. This means that there exists a constant $C_3$ such that $\lVert\exp(Q t)\rVert\leq C_3\exp(-\lambda t)$. For the sake of making a contradiction, let us assume that $\lim_{t\rightarrow\infty}\lVert y(t)\rVert>0$, because then $\min_{0\leq t\leq \infty}\lVert y(t)\rVert:=\lVert y^{\ast}\rVert$ is positive for all $t\geq 0$ due to the fact that once $y$ touches zero at any time, it will remain zero for any future time based on the governing equation for $y$. The aforementioned estimates altogether give us
\begin{align*}
\lVert y(t)\rVert &\leq C_1C_3\exp(-\lambda t)\lVert y_0\rVert + C_1C_2C_3\int_0^t\exp(-\lambda (t-s))\lVert m(s,y)\rVert\,\dd s\\
&\stackrel{\ref{item:m}}{\leq} C_1C_3\exp(-\lambda t)\lVert y_0\rVert + \frac{C_1C_2C_3M\varepsilon}{\lVert y^{\ast}\rVert}\int_0^t\exp(-\lambda (t-s))\lVert y(s)\rVert\,\dd s.
\end{align*}
By moving the factor $\exp(-\lambda t)$ to the left-hand side of the very last inequality, Gronwall's inequality in the integral form for the function $\lVert y(t)\rVert\exp(\lambda t)$ for which $C_1C_3\lVert y_0\rVert $ is nondecreasing reveals $\exp(\lambda t)\lVert  y(t)\rVert\leq C_1C_3\lVert y_0\rVert \exp\left(\frac{C_1C_2C_3M\varepsilon}{\lVert y^{\ast}\rVert} t\right)$ or
\begin{align*}
\lVert y(t)\rVert &\leq C_1C_3\lVert y_0\rVert\exp\left(\left(\frac{C_1C_2C_3M\varepsilon}{\lVert y^{\ast}\rVert}-\lambda\right)t\right).
\end{align*}
The key idea here was to include $\varepsilon$ as an upper bound for $\lVert y(s)\rVert$ on $[0,t]$ from which $\varepsilon$ and $y_0$ can now become control parameters. By selecting $\varepsilon$ and $\lVert y_0\rVert$ small enough such that $C_1C_3\lVert y_0\rVert\leq \varepsilon$ and $C_1C_2C_3M\varepsilon\slash \lVert y^{\ast}\rVert-\lambda<0$, we yield the fact that
\begin{equation*}
\lVert y(t)\rVert\leq \varepsilon\exp\left(\left(\frac{C_1C_2C_3M\varepsilon}{\lVert y^{\ast}\rVert}-\lambda\right)t\right)
\end{equation*}
holds for all $t$ provided that $\lVert y(t)\rVert\leq \varepsilon$. Observe that as long as the boundedness assumption $\lVert y(t)\rVert\leq \varepsilon$ is kept from the starting time $0$ up to any positive future time, any choice of the initial condition $\lVert y_0\rVert\leq \min\{\varepsilon\slash(C_1C_2),\varepsilon\}$ rediscovers $\lVert y(t)\rVert\leq\varepsilon\exp\left(\left(\frac{C_1C_2C_3M\varepsilon}{\lVert y^{\ast}\rVert}-\lambda\right)t\right)\leq \varepsilon$. Therefore, the boundedness by an exponentially decreasing function must hold on $[0,\infty)$, which is a contradiction to the given assumption. Therefore, $\lim_{t\rightarrow\infty}\lVert y(t)\rVert = 0$. 

\section{Proof of Theorem~\ref{thm:existIR}}
\label{app:existIR}
The Jacobian $\nabla_x g(\DFE)$ clearly gives us the eigenvalues $\alpha\slash\nu-\gamma-\mu$ and $-\mu-\kappa$ that can never be pure imaginary. The Jacobian $\nabla_x g(\EE)$ has the characteristic polynomial $\lambda^2+a\lambda+b$ where
\begin{align*}
a &= \frac{(\mu+\kappa)((\mu+\gamma)(\alpha-\gamma\nu) + \mu(\alpha-\gamma\nu-\mu\nu) + \alpha^2+\alpha\kappa+\alpha\kappa\nu)}{\beta(\gamma\nu + (\kappa+\mu)(1+\nu))}\\
b&=\frac{(\mu+\kappa)((\alpha+\mu)(\gamma+\kappa+\mu) + \gamma\kappa)(\alpha-\mu\nu-\gamma\nu)}{\beta(\gamma\nu + (\kappa+\mu)(1+\nu))}.
\end{align*}
If $\RM_0>1$, then we can assure that $b>0$ and
\begin{equation*}
a\geq \frac{(\mu+\kappa)((\mu+\gamma)(\alpha-\gamma\nu-\mu\nu) + \mu(\alpha-\gamma\nu-\mu\nu) + \alpha^2+\alpha\kappa+\alpha\kappa\nu)}{\beta(\gamma\nu + (\kappa+\mu)(1+\nu))}>0.
\end{equation*}
Since the addition of the two roots of the polynomial gives a real negative value and the multiplication gives a real positive value, the roots cannot be pure imaginary, but can be complex conjugate with negative real part.

\section{Proof of Theorem~\ref{thm:stableIR}}
\label{app:stableIR}
The proof reads as follows.
\begin{enumerate}[label=\normalfont ({B}\arabic*)]
\item According to Theorem~\ref{thm:stableperiodic}, we must first reveal the explicit formulation of $A(t)$, which is, by $\phi\equiv 0$,
\begin{equation*}
A(t) = \left(\begin{array}{cc}
\frac{\alpha}{\nu}+\frac{\delta p(t)}{\nu}-\gamma-\mu & 0\\
\gamma & -\mu-\kappa
\end{array}
\right).
\end{equation*}
We can simply decompose the following antiderivative matrix as the sum of two matrices 
\begin{equation*}
\int_0^t A(s)\,\text{d}s= \underbrace{\left(\begin{array}{cc}
\frac{\delta}{\nu}\int_0^tp(s)\,\text{d}s & 0\\
0 & 0
\end{array}
\right)}_{:=B(t)} + \underbrace{\left(\begin{array}{cc}
\frac{\alpha }{\nu}-\gamma -\mu  & 0\\
\gamma  & -\mu  -\kappa 
\end{array}
\right)}_{:=Q}t
\end{equation*}
because then the corresponding fundamental matrix reads as
\begin{equation*}
Z(t) = \exp(B(t))\exp(Qt) = \underbrace{\left(\begin{array}{cc}
\exp\left(\frac{\delta}{\nu}\int_0^tp(s)\,\text{d}s\right) & 0\\
0 & 1
\end{array}
\right)}_{:=P(t)}\exp(Qt).
\end{equation*}
Observe that $P(0)=P(\sigma)=\text{id}$. If $\RM_0<1$, then the Floquet exponents (i.e.,\ the eigenvalues of $Q$) are negative and real, which fulfills the condition~\ref{item:floquet}. Finally, a glimpse over the following formulation
\begin{equation*}
m(t,y)=\left(
\frac{\beta(t)(N-y_1-y_2)y_1}{y_1+\nu N} - (\gamma+\mu)y_1 - \left(\frac{\beta(t)}{\nu}-\gamma-\mu\right)y_1,0
\right)
\end{equation*}
together with the fact that $y_1,y_2\geq 0$ while $y_1+y_2\leq N$ give us
\begin{align*}
\lVert m(t,y)\rVert&\leq \underbrace{\left(\left\lVert\frac{\beta(t) N}{y_1+\nu N}\right\rVert+ \left\lVert\frac{y_1}{y_1+\nu N}\right\rVert+\left\lVert\frac{\beta(t)}{\nu}\right\rVert\right)}_{\leq M_1} y_1+\underbrace{\left\lVert\frac{y_1}{y_1+\nu N}\right\rVert}_{\leq M_2} y_2\\
&\leq \max\{M_1,M_2\}\sqrt{y_1^2+2y_1y_2+y_2^2}\leq \underbrace{\sqrt{2}\max\{M_1,M_2\}}_{:=M}\sqrt{y_1^2+y_2^2},
\end{align*}
which confirms the condition~\ref{item:m}.
%

\item As far as the nontrivial periodic solution $\phi$ is concerned, we obtain 
\begin{align*}
A(t) &= \left( \begin {array}{cc} 
-{\frac {\beta\,\phi_{{1}}}{\phi_{{1
}}+ \nu N}}+{\frac { \beta\left( N-\phi_{{1}}-\phi_{{2}} \right) }{
\phi_{{1}} + \nu N}} \left(1- {\frac {\phi_{{1}}}{\phi_{{1
}}+ \nu N}}\right)-\gamma-\mu & -{
\frac {\beta\,\phi_{{1}}}{\phi_{{1}} + \nu N}}\\ \noalign{\medskip}
\gamma&-\mu-\kappa\end {array} \right)\\
m(t,y) &= \left( \begin {array}{c} -{\frac {\beta\,N\nu\,y_{{1}} \left( N\nu\,y
_{{1}}+N\nu\,y_{{2}}+Ny_{{1}}+\phi_{{1}}y_{{2}}-\phi_{{2}}y_{{1}}
 \right) }{ \left( \phi_{{1}}+y_{{1}}+\nu N \right)  \left(
\phi_{{1}}+\nu N \right) ^{2}}}\\ \noalign{\medskip}0\end {array} \right)
\end{align*}
from which the condition~\ref{item:m} applies to $m(t,y)$. We may decompose $\beta(t)$ to decompose $\int_0^{t}A(s)\,\dd s$ and 
choose the following specification
\begin{align*}
\tilde{Q}&=\left(\begin{array}{cc}
\frac{\alpha}{t}\int_0^t -\frac{\phi_1(s)}{\phi_1(s)+\nu N} + \frac{N-\phi_1(s)-\phi_2(s)}{\phi_1(s)+\nu N}\left(1-\frac{\phi_1(s)}{\phi_1(s)+\nu N}\right) \,\dd s- \gamma  - \mu  & -\frac{\alpha}{t}\int_0^{t}\frac{\phi_1(s)}{\phi_1(s)+vN}\,\dd s\\
\gamma & -\mu -\kappa 
\end{array}
\right),
\end{align*}
which is dependent on time. With this specification, we obtain
\begin{align*}
\tilde{B}(t)&=\left(\begin{array}{cc}
\frac{1}{t}\int_0^t p(s)\left(-\frac{\phi_1(s)}{\phi_1(s)+\nu N} + \frac{N-\phi_1(s)-\phi_2(s)}{\phi_1(s)+\nu N}\left(1-\frac{\phi_1(s)}{\phi_1(s)+\nu N}\right)\right) \,\dd s  & -\frac{1}{t}\int_0^{t}\frac{p(s)\phi_1(s)}{\phi_1(s)+vN}\,\dd s\\
0 & 0 
\end{array}
\right)
\end{align*}
and $\tilde{P}(t) :=\exp(\tilde{B}(t))$ where it holds that $Z(t)=\tilde{P}(t)\exp(\tilde{Q}t)$. Observe that all the integrands in every element of $\tilde{B}(t)$ are continuous and $\sigma$-periodic. 
This shows that $\tilde{P}(t)$ is bounded. A similar argument as in the proof of Theorem~\ref{thm:stableperiodic} gives us an idea to see that the stability of the nontrivial periodic solution is determined by the eigenvalues of $\tilde{Q}$. It turns out that if the $(1,1)$ element of $\tilde{Q}$ is negative, then the resulting characteristic polynomial of $\tilde{Q}$ exhibits a positive coefficient of the first-order term and a positive constant. This simply gives us two eigenvalues that lie in the open left-half plane in $\mathbb{C}$. It now remains to show under what condition the $(1,1)$ element of $\tilde{Q}$ remains negative for all time $t\geq 0$. The mean value theorem for integrals discovers that for every $t>0$, there exists $\tau\in [0,t]$ such that the $(1,1)$ element of $\tilde{Q}$ is equal to
\begin{equation*}
\underbrace{\left(-\frac{\nu\phi_1(\tau)}{\phi_1(\tau)+\nu N} + \frac{\nu(N-\phi_1(\tau)-\phi_2(\tau))}{\phi_1(\tau)+\nu N} \left(1-\frac{\phi_1(\tau)}{\phi_1(\tau)+\nu N} \right)\right)}_{:=\Phi(\tau)}\frac{\alpha}{\nu t}\int_0^t1\,\dd s-\gamma-\mu.
\end{equation*}
Since $\Phi$ is continuous and $\sigma$-periodic, it suffices to analyze the negativity on $[0,\sigma]$. In particular, if
\begin{equation*}
\RM_{\max}:=\max_{0\leq t\leq \sigma}\frac{\alpha}{(\gamma+\mu)\nu}\Phi(t)<1,
\end{equation*}
then, certainly, the $(1,1)$ element of $\tilde{Q}$ is always negative. 
\end{enumerate}
\end{document}